\documentclass[11pt]{amsart}
\usepackage{amsmath,amssymb,amsfonts,amsthm}
\usepackage{epsfig}
\usepackage{enumerate,color}
\usepackage{subfigure}
 \usepackage{amsaddr}
\newtheorem{theorem}{Theorem}[section]
\newtheorem{lemma}[theorem]{Lemma}

\newtheorem{claim}[theorem]{Claim}
\newtheorem{conjecture}[theorem]{Conjecture}
\newtheorem{observation}[theorem]{Observation}
\theoremstyle{definition}

\def\cL{\mathcal{L}}
\def\cC{\mathcal{C}}
\def\cF{\mathcal{F}}
\def\cR{\mathcal{R}}
\def\cS{\mathcal{S}}
\def\bbR{\mathbb{R}}

\let\oldrceil\rceil
\renewcommand{\rceil}{\right\oldrceil}
\let\oldlceil\lceil
\renewcommand{\lceil}{\left\oldlceil}

\begin{document}

\title[Coloring non-crossing strings]{Coloring non-crossing strings}

\author[L. Esperet]{Louis Esperet} 
\address{G-SCOP, CNRS \& Univ. Grenoble Alpes, Grenoble, France}
\email{louis.esperet@grenoble-inp.fr}
\thanks{A preliminary version of this work appeared in the proceedings of EuroComb'09~\cite{EGL09}.\\
  Louis Esperet is partially supported by ANR Project Stint
  (\textsc{anr-13-bs02-0007}) and LabEx PERSYVAL-Lab
  (\textsc{anr-11-labx-0025-01}). Arnaud Labourel is partially supported by ANR project MACARON (\textsc{anr-13-js02-0002}).}

\author[D. Gon\c calves]{Daniel Gon\c calves} 
\address{LIRMM (Universit\'e Montpellier 2, CNRS), Montpellier, France}
\email{goncalves@lirmm.fr}

\author[A. Labourel]{Arnaud Labourel} 
\address{LIF, Aix-Marseille Universit\'e \& CNRS, France}
\email{arnaud.labourel@lif.univ-mrs.fr}

\date{}

\dedicatory{}
\sloppy

\begin{abstract}
For a family $\cF$ of geometric objects in the plane, define $\chi(\cF)$ as the least integer
$\ell$ such that the elements of $\cF$ can be colored with $\ell$
colors, in such a way that any two intersecting objects have distinct
colors. When $\cF$ is a set of pseudo-disks that may only intersect on their boundaries, and
such that any point of the plane is contained in at most $k$
pseudo-disks, it can be proved that $\chi(\cF)\le 3k/2 + o(k)$ since
the problem is equivalent to cyclic coloring of plane graphs. In this
paper, we study the same problem when pseudo-disks are replaced by a
family $\cF$ of pseudo-segments (a.k.a. strings) that do not cross. In
other words, any two strings of $\cF$ are only allowed to ``touch''
each other. Such a family is said to be $k$-touching if no point of
the plane is contained in more than $k$ elements of $\cF$. We give
bounds on $\chi(\cF)$ as a function of $k$, and in particular we show
that $k$-touching segments can be colored with $k+5$ colors. This
partially answers a question of Hlin\v en\'y (1998) on the chromatic
number of contact systems of strings.
\end{abstract}

\maketitle

\section{Introduction}

For a family $\cF=\{S_1,\ldots,S_n\}$ of subsets of a set
$\Omega$, the \emph{intersection graph} $G(\cF)$ of $\cF$ is
defined as the graph with vertex-set $\cF$, in which two vertices
are adjacent if and only if the corresponding sets have non-empty
intersection.

For a graph $G$, the \emph{chromatic number} of $G$, denoted
$\chi(G)$, is the least number of colors needed in a proper coloring
of $G$ (a coloring such that any two adjacent vertices have distinct
colors). When talking about a proper coloring of a family $\cF$
of subsets of a given set, we implicitly refer to a proper coloring
of the intersection graph of $\cF$, thus the chromatic number
$\chi(\cF)$ is defined in a natural way. 

The chromatic number of families of geometric objects in the plane
have been extensively studied since the sixties
\cite{AG60,GL85,Kos88,KK97,McG00}. Since it is possible to construct
sets of pairwise intersecting (straight-line) segments of any size,
the chromatic number of sets of segments in the plane is unbounded in
general. However, Erd\H{o}s conjectured that triangle-free
intersection graphs of segments in the plane have bounded chromatic
number (see \cite{Gya87}). This was recently
disproved~\cite{PK12}. The conjecture of Erd\H{o}s initiated the study
of the chromatic number of families of geometric objects in the plane
as a function of their \emph{clique number}, the maximum size of
subsets of the family that pairwise intersect~\cite{FP08}. In this
paper, we consider families of geometric objects in the plane for
which the chromatic number only depends on local properties of the
families, such as the maximum number of objects containing a given
point of the plane.

\medskip

Consider a set $\cF=\{\cR_1, \ldots, \cR_n\}$ of pseudo-disks (subsets
of the plane which are homeomorphic to a closed disk) such that the
intersection of the interiors of any two pseudo-disks is empty. Let
$\mathcal{H}_\cF$ be the planar hypergraph with vertex set $\cF$, in
which the hyperedges are the maximal sets of pseudo-disks whose
intersection is non-empty. A proper coloring of $\cF$ is equivalent to
a coloring of $\mathcal{H}_\cF$ in which all the vertices of each
hyperedge have distinct colors. If every point is contained in at most
$k$ pseudo-disks, Borodin conjectured that there exists such a
coloring of $\mathcal{H}_\cF$ with at most $\frac32 k$
colors~\cite{Bor84}. It was recently proved that this conjecture holds
asymptotically~\cite{AEH13} (not only
in the plane, but also on any fixed surface). As a
consequence, $\cF$ can be properly colored with $\frac32 k+o(k)$
colors.

\begin{figure}[htbp]
\begin{center}
\hspace{0.05cm}
\subfigure[\label{fig:strings}]{\includegraphics[scale=0.7]{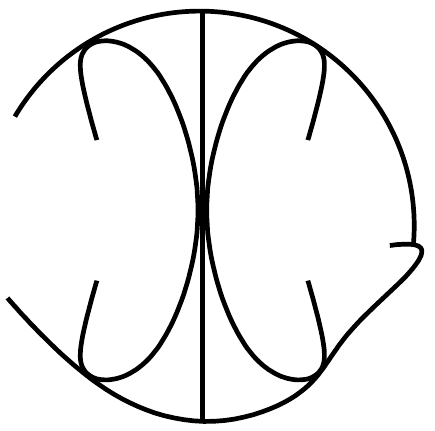}}
\hspace{2cm}
\subfigure[\label{fig:contact}]{\includegraphics[scale=0.75]{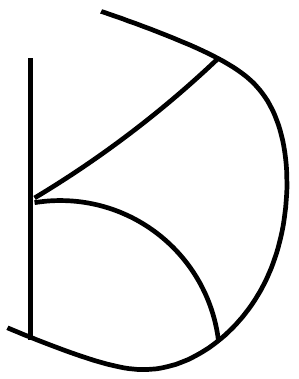}}
\caption{(a) A 3-touching set of strings $\cS_1$ with $G(\cS_1)\cong K_5$. (b) A one-sided 3-contact representation of
    curves $\cS_2$ with $G(\cS_2)\cong
    K_4$. \label{fig:regstrings}}
\end{center}
\end{figure}

It seems natural to investigate the same problem when pseudo-disks are
replaced by \emph{pseudo-segments}. These are continuous injective
functions from $[0,1]$ to $\bbR^2$ and are usually referred to as
\emph{strings}.  Consider a set $\cS=\{\cC_1, \ldots, \cC_n\}$ of such
strings.  We will always assume that any two strings intersect in a
finite number of points. We say that $\cS$ is \emph{touching} if no pair of strings
of $\cS$ cross, and that it is \emph{$k$-touching} if furthermore at
most $k$ strings can ``touch'' in any point of the plane, i.e., any
point of the plane is contained in at most $k$ strings (see
Figure~\ref{fig:strings} for an example).

Note that the family of all touching sets of strings contains all
\emph{contact systems of strings}, defined as sets of strings such
that the interior of any two strings have empty intersection. In other
words, if $c$ is a contact point in the interior of a string $s$, all
the strings containing $c$ distinct from $s$ end at
$c$. In~\cite{Hli98}, Hlin\v en\'y studied contact system of strings
such that all the strings ending at $c$ leave from the same side of
$s$. Such a representation is said to be \emph{one-sided} (see
Figure~\ref{fig:contact} for an example). It was proved
in~\cite{Hli98} that if a contact system of strings is $k$-touching and
every contact point is one-sided, then the strings can be colored with
$2k$ colors.

\medskip

In this paper, our aim is to study $k$-touching sets of strings in
their full generality. Observe that if $\cS$ is $k$-touching,
$k$ might be much smaller than the maximum degree of $G(\cS)$. 
However, based on the cases of pseudo-disks and contact
system of strings, we conjectured the following in the
conference version of this paper~\cite{EGL09}:

\begin{conjecture}\label{conj:multi}
For some constant $c>0$, any $k$-touching set of strings
can be colored with $ck$ colors.
\end{conjecture}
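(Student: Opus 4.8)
The natural line of attack is to imitate the pseudo-disk case and reduce the problem to cyclic coloring of a plane graph. Given a $k$-touching set $\cS=\{\cC_1,\dots,\cC_n\}$, form its \emph{planarization}: the plane multigraph $H$ whose vertices are the points where at least two strings meet (the \emph{touching points}) together with the free ends of the strings, and whose edges are the maximal subarcs of strings joining consecutive such points. Each $\cC_i$ becomes a path $P_i$ in $H$, two strings are adjacent in $G(\cS)$ precisely when $P_i$ and $P_j$ share a vertex, and at every vertex the arcs belonging to the various strings are pairwise non-interleaving in the rotation, since the strings themselves do not cross. The aim is then to build an auxiliary plane graph $G^\ast$ on a vertex set $\{x_1,\dots,x_n\}$ (one vertex per string, obtained by contracting each $P_i$) together with, for every touching point $q$, a face whose incident vertices are exactly the $\le k$ strings through $q$; any cyclic coloring of $G^\ast$ — which by the (asymptotic) cyclic-coloring bound~\cite{AEH13} uses at most $\tfrac32k+o(k)$ colors, and at most $2k$ colors in any case, provided $G^\ast$ has all faces of size $O(k)$ — then yields a proper coloring of $G(\cS)$ by giving $\cC_i$ the color of $x_i$.

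Before doing this one needs a topological preliminary: a $k$-touching family of pairwise touching strings has $O(k)$ members. This bounds the clique number of $G(\cS)$ by $O(k)$ and should follow from a planarity/Euler-formula count on the arrangement of such a family (each of the $\binom t2$ pairs contributes a touching point of multiplicity $\le k$, and a plane arrangement of $t$ mutually touching curves cannot be this rich once $t\gg k$). The bound is indispensable: unlike for pseudo-disks, $G(\cS)$ need not be sparse at all — one can realise $K_{m,m}$ with only $2$-touching strings — so there is no hope of a degeneracy argument, and the number of colors must be governed by the local multiplicity alone.

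The main obstacle is realising $G^\ast$, and here strings genuinely differ from pseudo-disks. Around a common point, pseudo-disks with disjoint interiors look like non-overlapping angular sectors, so the disks through a point are automatically rotationally consecutive and can be made cofacial on a single small face; but several strings can pass through a common touching point in a \emph{nested} configuration (think of nested parabolas through the origin), and then the strings through $q$ cannot all be made to lie on one bounded face of a plane graph near $q$ — indeed the existence of $K_5$ as the intersection graph of a $3$-touching family (Figure~\ref{fig:strings}) shows that using, at each touching point, a single face of size equal to its multiplicity is incompatible with the cyclic-coloring bound. The fix I would pursue is to replace each touching point $q$ by a small \emph{gadget}: a plane graph on the $m_q\le k$ string-terminals at $q$, all of whose faces have size $O(k)$ and which jointly force all $\binom{m_q}{2}$ pairs to get distinct colors, the nested structure being resolved by ``peeling'' it into $O(1)$ layers at the cost of enlarging faces by a constant factor (harmless for the conjecture). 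Installing these gadgets consistently with the rotation system of $H$ — this is exactly where the non-crossing hypothesis is used — triangulating the remaining faces so that the maximum face size of $G^\ast$ stays $O(k)$, and checking that contracting the paths $P_i$ preserves the relevant face structure, is where the real work lies.

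If carried out, this yields $\chi(\cS)=O(k)$, hence the conjecture, with an explicit but certainly non-optimal constant $c$. I expect that pushing $c$ down toward the value $1$ suggested by the $k+5$ bound for straight-line segments cannot go through the black-box cyclic-coloring bound and requires a discharging argument exploiting the geometry of segments directly; that quantitative refinement, rather than the qualitative statement, is the part of the program I expect to be hardest to complete in full generality.
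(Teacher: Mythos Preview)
Your proposal contains a fundamental misconception that sends the whole strategy in the wrong direction. You assert that ``$K_{m,m}$ can be realised with only $2$-touching strings'' and conclude that ``there is no hope of a degeneracy argument''. This is false: by Observation~\ref{obs:pla}, the intersection graphs of $2$-touching strings are \emph{exactly} the planar graphs, so already $K_{3,3}$ cannot occur. More to the point, the conjecture is proved precisely by a degeneracy argument: Fox and Pach (Lemma~\ref{lem:fox}, via a probabilistic argument in the spirit of the Crossing Lemma) show that any $k$-touching set of $n$ strings has fewer than $3ekn$ edges, so the intersection graph is $(6ek)$-degenerate and hence $(6ek+1)$-colorable. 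The entire approach you rule out is the one that works.

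The cyclic-coloring route you pursue instead has a genuine obstruction that your ``gadget'' paragraph does not overcome. Contracting each path $P_i$ to a single vertex $x_i$ does \emph{not} yield a plane graph in general: the example in Figure~\ref{fig:strings} already gives $K_5$ as a $3$-touching intersection graph, so the contracted graph $G^\ast$ need not be planar at all, and there is no face structure to cyclically color. You recognise this, but the proposed fix --- replacing each touching point by a plane gadget whose faces of size $O(k)$ force all $\binom{m_q}{2}$ pairs apart --- is exactly asking that a cyclic coloring of a plane graph with faces of size $O(k)$ distinguish all pairs at a nested touching point; since the nested strings at $q$ are \emph{not} cofacial in any plane realisation near $q$ (as you yourself observe), no local gadget installed at $q$ achieves this. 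The ``peeling into $O(1)$ layers'' is a hope, not an argument, and the $K_5$ example shows it cannot work with the numbers you want. The correct route is the edge-count, and once you have it the coloring is immediate.
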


This conjecture was subsequently proved by Fox and Pach~\cite{FP10},
who showed that any $k$-touching set of strings can be colored with
$6ek+1$ colors (where $e$ is the base of the natural logarithm). In
Section~\ref{sec:general}, we show how to slightly improve their bound for small
values of $k$.  We also show that for any odd $k$, the clique on
$\tfrac92 (k-1)$ vertices can be represented as a set of $k$-touching strings,
so the best possible constant $c$ in Conjecture~\ref{conj:multi} is
between $4.5$ and $6e\approx 16.3$.

In Section~\ref{sec:intersect}, we give improved bounds when any two
strings can intersect a bounded number of times. In
Section~\ref{sec:contact}, we restrict ourselves to contact systems of
strings where any two strings intersect at most once (called
$1$-intersecting), which were previouly studied by Hlin\v
en\'y~\cite{Hli98}. He asked whether there is a constant $c$ such that
every one-sided $1$-intersecting $k$-touching contact system of strings is
$(k+c)$-colorable. We prove that they are $(\tfrac{4k}3
+6)$-colorable, and that every $k$-touching contact system of
straight-line segments is
$(k+5)$-colorable. Note that we do not need our contact
systems to be one-sided.

\medskip

Before giving general bounds, let us first mention two classical families
of touching strings for which coloring problems are well
understood. 

\smallskip

If a $k$-touching set of strings has the property that the interior
of each string is disjoint from all the other strings, then each
string can be thought of as an edge of some (planar) graph with maximum
degree $k$. By a classical theorem of Shannon, the strings can then be
colored with $3k/2$ colors. If moreover, any two strings intersect at
most once, then they can be colored with $k+1$ colors by a theorem of Vizing
 (even with $k$ colors whenever $k\ge 7$,
using a more recent result of Sanders and Zhao~\cite{SZ01}). In this
sense, all the problems considered in this article can be seen as a
extension of edge-coloring of planar graphs.

\smallskip

An \emph{$x$-monotone string} is a string such that every
vertical line intersects it in at most one point. Alternatively, it
can be defined as the curve of a continuous function from an interval
of $\bbR$ to $\bbR$. Sets of $k$-touching $x$-monotone strings are closely related to bar
$k$-visibility graphs. A \emph{bar $k$-visibility graph} is a graph
whose vertex-set consists of horizontal segments in the plane (bars),
and two vertices are adjacent if \emph{and only if} there is a
vertical segment connecting the two corresponding bars, and
intersecting no more than $k$ other bars. It is not difficult to see
that the graph of any set of $k$-touching $x$-monotone strings is a
spanning subgraph of some bar $(k-2)$-visibility graph, while any bar
$(k-2)$-visibility graph can be represented as a set of $k$-touching
$x$-monotone strings. Using this correspondence, it directly follows
from~\cite{DEG07} that $k$-touching $x$-monotone strings are $(6k-6)$-colorable, and that the complete graph on $4k-4$ vertices can be represented
as a set of $k$-touching $x$-monotone strings. If the left-most point
of each $x$-monotone string intersects the vertical line $x=0$, then
it directly follows from~\cite{FM08} that the strings can be colored
with $2k-1$ colors (and the complete graph on $2k-1$ vertices can be represented
by $k$-touching $x$-monotone strings in this specific way).

\section{General bounds}\label{sec:general}

Before proving our first results on the structure of sets of $k$-touching
strings in general, we make two important observations:

\begin{observation}\label{obs:pla}
The family of intersection graphs of 2-touching strings is exactly the
class of planar graphs.
\end{observation}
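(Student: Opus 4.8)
The statement asserts two inclusions, and I would prove them separately.

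\textbf{From strings to planar graphs.} Given a $2$-touching family $\cS=\{C_1,\dots,C_n\}$, the plan is to exhibit $G(\cS)$ as a minor of a plane graph. Since the strings pairwise meet in finitely many points and no point lies on three of them, the set $Q$ of contact points is finite, and every $q\in Q$ lies on exactly two strings $C_i,C_j$ which \emph{touch but do not cross} at $q$. I would ``resolve'' each such contact: choose around $q$ a tiny disk $D_q$ meeting no string other than $C_i$ and $C_j$, with the $D_q$ pairwise disjoint, and inside $D_q$ push $C_i$ and $C_j$ slightly apart so that they become disjoint there, adding a short connector arc $\beta_q$ bridging the gap. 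Because $C_i$ and $C_j$ touch without crossing and no other string enters $D_q$, this can be carried out without creating any crossing. After all resolutions the strings become pairwise disjoint simple arcs $C_1',\dots,C_n'$, and $H:=\bigcup_i C_i'\cup\bigcup_{q\in Q}\beta_q$ is a finite plane multigraph. Contracting each arc $C_i'$ to a single vertex $v_i$ yields a graph in which $v_iv_j$ is an edge precisely when some $\beta_q$ joined $C_i'$ to $C_j'$, i.e. precisely when $C_i\cap C_j\neq\emptyset$; this graph is $G(\cS)$ up to parallel edges and up to the isolated vertices coming from strings disjoint from all the others. Since minors of planar graphs (and disjoint unions with isolated vertices) are planar, $G(\cS)$ is planar.

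\textbf{From planar graphs to strings.} Fix a planar graph $G$ together with a crossing-free drawing in which each edge $uv$ is an arc $\gamma_{uv}$ and distinct edges meet only at shared endpoints. Mark an interior point $m_{uv}$ on each $\gamma_{uv}$, splitting it into halves $\gamma^{u}_{uv}$ and $\gamma^{v}_{uv}$, and pick around each vertex $v$ a small closed disk $D_v$, the $D_v$ pairwise disjoint, such that $D_v$ meets the drawing only along the initial stubs of the arcs incident to $v$. I would then let the string $s_v$ be the union of the half-edges $\gamma^{v}_{vw}$ (for $w\sim v$), each truncated to start where it enters $D_v$, joined inside $D_v$ into a single simple arc that passes in turn through the entry points in the cyclic order given by the drawing (for instance a curve hugging $\partial D_v$ and spiralling slightly inward). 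Each $s_v$ is a simple arc, hence a string; one has $s_u\cap s_v=\{m_{uv}\}$ when $uv\in E(G)$ (the two strings touch, each ending at $m_{uv}$, without crossing) and $s_u\cap s_v=\emptyset$ otherwise, using only that the $D_v$ and the edge-arcs of $G$ are pairwise disjoint except at common vertices. Finally no point of the plane lies on three strings: inside each $D_v$ only $s_v$ is present, at each midpoint only the two strings of that edge meet, and elsewhere the truncated half-edges are pairwise disjoint. Thus $\cS=\{s_v:v\in V(G)\}$ is $2$-touching with $G(\cS)\cong G$ (isolated vertices of $G$ are realized by tiny arcs placed away from everything else).

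\textbf{Main obstacle.} The one genuinely delicate point is the topological bookkeeping in the first direction: justifying that a contact of two non-crossing strings can be opened into a short connector arc without introducing crossings, so that the resulting object really is a plane graph from which $G(\cS)$ is extracted as a minor. This is exactly what the hypotheses ``no two strings cross'' and ``at most two strings through any point'' are tailored to provide; once the local picture at a contact point is pinned down, the finiteness of $Q$, the minor extraction, and all the verifications in the second direction are routine.
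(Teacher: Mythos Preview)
Your proof is correct, but it takes a genuinely different route from the paper's. The paper reduces both inclusions to the fact (via Koebe's theorem~\cite{Koe36}) that the intersection graphs of 2-touching pseudo-disks are exactly the planar graphs: for the ``strings $\Rightarrow$ planar'' direction it thickens each string $\cC_i$ into a pseudo-disk $\cR_i\supset\cC_i$ so that the family of $\cR_i$'s is still 2-touching, and for the ``planar $\Rightarrow$ strings'' direction it starts from a disk contact representation of $G$ and takes a connected arc on the boundary of each disk. Your argument is more elementary in that it avoids invoking Koebe's theorem altogether---you extract planarity directly by resolving contacts and taking a minor, and you build the strings directly from an arbitrary planar drawing via the half-edge construction---at the cost of more explicit topological bookkeeping (the local resolution at a non-crossing contact, and the spiralling arc inside each $D_v$). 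Both approaches are standard; yours is self-contained, while the paper's is shorter and ties the observation to the pseudo-disk case already discussed in the introduction.
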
 

The class of planar graphs being exactly the class of intersection
graphs of 2-touching pseudo-disks (see \cite{Koe36}) it is clear
that planar graphs are intersection graphs of 2-touching strings (by
taking a connected subset of the boundaries of each pseudo-disk). Furthermore, every intersection graph of 2-touching strings
is contained in an intersection graph of 2-touching pseudo-disks,
and is thus planar. Indeed, it is easy given a set of 2-touching
strings $\cS=\{\cC_1, \ldots, \cC_n\}$ to draw a set of 2-touching
pseudo-disks $\cF=\{\cR_1, \ldots, \cR_n\}$ such that $\cC_i \subset
\cR_i$ for every $i\in[1,n]$.

\begin{observation}\label{obs:dra}
We can assume without loss of generality that the strings in any set
of $k$-touching strings are polygonal lines (i.e. each string is the
union of finitely many straight-line segments) and that no endpoint 
of a string of $\cS$ coincides with an intersection between strings of $\cS$.
\end{observation}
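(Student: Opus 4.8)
The plan is to pass to an equivalent configuration in two steps: first make all strings polygonal, then push the ``bad'' endpoints off the other strings, in both steps keeping the intersection graph and the $k$-touching property unchanged. Throughout, the relevant finiteness is that $\cS=\{\cC_1,\dots,\cC_n\}$ has finitely many strings, any two of which meet in finitely many points, so the set $P$ of points of the plane lying on at least two strings is finite; together with the $2n$ endpoints, $P$ cuts each $\cC_i$ into finitely many sub-arcs whose relative interiors are disjoint from all the other strings.

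For the polygonalization, the clean way to organize things is to regard $U:=\bigcup_i\cC_i$ as a finite topological graph embedded in the plane: its vertices are the points of $P$ together with the endpoints of the strings, its edges are the sub-arcs just described, and each string is a path of this graph. This embedded graph has a well-defined combinatorial type (abstract multigraph, cyclic order of the edges around each vertex, and an outer face; if $U$ is disconnected one treats its components one at a time, drawing them far apart, since then they do not interact), and one may redraw it with polygonal edges realizing the same combinatorial type — this is standard, the embedding being determined up to homeomorphism by its rotation system. Cutting the new drawing back into the same paths makes every string a polygonal line. Two strings intersect precisely when their paths share a vertex, which is unchanged, so the intersection graph is the same; no two strings cross, since this is a condition on the cyclic orders at the shared vertices, which are preserved; and the number of strings through any point equals the number of paths containing the corresponding vertex, again unchanged, so the family is still $k$-touching. (Concretely, and without the language of graphs, this amounts to replacing, inside a tiny disk around each $p\in P$, the bunch of pairwise non-crossing sub-arcs through $p$ by the straight segments from $p$ to where they leave the disk — which respects their cyclic order at $p$, hence creates no crossing — and replacing each remaining sub-arc, now lying at positive distance from all the others and from $P$, by a sufficiently close polygonal approximation with the same endpoints.)

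It remains to destroy the coincidences between endpoints and intersection points, which is now easy. If an endpoint $e$ of a string $\cC_i$ lies on another string, then $e\in P$, and since the configuration is polygonal, near $e$ each string through $e$ consists of one or two straight segments issuing from $e$. Prolong $\cC_i$ beyond $e$ by one short straight segment whose direction is parallel to none of these finitely many segments: if it is short enough it meets the strings through $e$ only at $e$, and misses every other string, every other point of $P$, and every other endpoint. Now $e$ is interior to $\cC_i$, so it is no longer an offending endpoint; every former touching survives and no new one appears, and $e$ carries exactly the same strings, so the $k$-touching property is preserved. Repeating this at each of the finitely many offending endpoints (one at a time, including the case where several of them coincide at one point of $P$) yields the desired configuration.

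The only genuinely delicate point is the polygonalization: a general string may behave wildly near a point of $P$, so one cannot naively ``straighten a small initial piece''. This is precisely what the topological-graph viewpoint takes care of — the local wildness becomes irrelevant once one remembers only the finite combinatorial data — and it is also where the hypothesis that any two strings meet finitely often is essential, since it is what makes that combinatorial data finite and forces the ``non-touching'' portions of the strings to lie at positive distance from one another.
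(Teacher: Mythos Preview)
Your argument is correct and follows essentially the same route as the paper's: both pass to the finite plane (multi)graph whose vertices are the contact points together with the string endpoints and whose edges are the sub-arcs between consecutive such points, redraw it with the same combinatorial embedding but with straight or polygonal edges --- the paper subdivides each edge once to make the graph simple and then invokes F\'ary's theorem by name, while you appeal directly to the equivalent redrawing fact for plane graphs --- and finally prolong each offending endpoint by a short segment. One tiny point you could make explicit (the paper is vaguer still): when prolonging a string at a contact point, choose the new segment to lie in a sector adjacent to the string's incoming half-edge, so that the extension does not turn a touching into a crossing.
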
 

To see this, take a set $\cS$ of $k$-touching strings and consider the
following graph $G$: the vertices are the contact points and the
endpoints of the strings of $\cS$, and the edges connect two points if
they are consecutive in some string of $\cS$. The resulting graph is planar,
but might contain multiple edges. Subdivide each edge once, and
observe that the resulting graph $H$ is a simple planar (finite) graph, and
each string of $\cS$ is the union of some edges of $H$. Since $H$ is a
simple planar graph, by F\'ary's theorem~\cite{Far48} it has an
equivalent drawing in which all the edges are (straight-line) segments. 
If some endpoint of a string of $\cS$ coincides with an intersection
$x$ between strings of $\cS$, one can prolong each string ending at
$x$ with a small enough segment, and Observation~\ref{obs:dra} follows.

\medskip

The following result was proved by Fox and Pach~\cite{FP10} (this
shows Conjecture~\ref{conj:multi}). In the following, $e$ is the base of the natural logarithm.

\begin{theorem}[\cite{FP10}]\label{th:fox}
Any $k$-touching set of strings is $(6ek+1)$-colorable.
\end{theorem}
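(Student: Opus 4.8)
The statement to prove is Theorem~\ref{th:fox}: any $k$-touching set of strings is $(6ek+1)$-colorable. My plan is to bound the chromatic number via a bound on the maximum average degree, or more precisely via a degeneracy-type argument applied to a cleverly chosen subgraph. The standard approach for coloring intersection graphs of curves with few crossings is to exploit the fact that, although the intersection graph $G(\cS)$ may have large maximum degree (as the paper already warns), it cannot be everywhere dense: because the strings do not cross and only touch, a suitably large subset of the strings must contain a string that touches few others. So I would aim to show that every subgraph of $G(\cS)$ induced by a sub-family of strings has a vertex of degree less than $6ek$; this gives $(6ek)$-degeneracy, hence $(6ek+1)$-colorability. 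By Observation~\ref{obs:dra} we may assume all strings are polygonal and that no string-endpoint coincides with a touching point, which keeps the incidence structure clean.

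**The key counting step.** The heart of the argument is a bound of the following shape: in any $k$-touching family of $n$ strings, the number of touching \emph{pairs} (equivalently, edges of $G(\cS)$) is at most roughly $c k n$ for an explicit constant $c$. If this holds for every sub-family, it immediately yields $(2ck)$-degeneracy. To get the constant $6e$, I expect the argument to be probabilistic rather than a direct Euler-formula count: one forms, from the family of $n$ strings, an auxiliary planar (multi)graph whose vertices are the touching points and string-endpoints and whose edges are the maximal ``pieces'' of strings between consecutive such points, as in Observation~\ref{obs:dra}. Each touching point of multiplicity $m \le k$ contributes $\binom m2$ edges to $G(\cS)$ but corresponds to a vertex of degree $\ge m$ (or to a region) in the planar graph, and planarity caps the total ``curve complexity''. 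The factor $e$ strongly suggests a random-sampling trick: keep each string independently with some probability $p$, observe that a point of multiplicity $m$ survives with all-but-few strings only with probability governed by $p^m$, optimize $p \approx 1/k$, and use $\sup_m m(1-1/k)^{m-1} \le k/e$ asymptotically, so that the expected number of surviving touching pairs is controlled and one extracts a low-degree vertex. This is essentially the Fox--Pach technique of bounding the number of edges of string graphs with bounded local complexity.

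**Assembling the coloring.** Once the sparsity/degeneracy bound is in hand, coloring is routine: a graph in which every subgraph has a vertex of degree $\le d$ is $(d+1)$-colorable by the greedy algorithm on a degeneracy ordering. One has to be slightly careful that the sparsity bound is inherited by induced subgraphs — but any sub-family of a $k$-touching family is again $k$-touching, so the counting step applies verbatim to it, and degeneracy follows. Chasing the constants through the optimization of $p$ should deliver exactly the bound $d < 6ek$, hence $(6ek+1)$ colors.

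**Main obstacle.** The delicate point is the planarity accounting in the counting step: a single touching point shared by $m$ strings must be handled so that it is charged correctly — it creates $\binom m2 = \Theta(m^2)$ edges of $G(\cS)$ but only $\Theta(m)$ worth of ``local planar structure'', so a naive Euler-formula bound on the auxiliary planar graph loses a factor of $k$ and only gives $O(k^2 n)$ edges, which is useless. The random-sampling step is precisely what converts this quadratic local blow-up into a linear bound: after deleting all but a bounded number of strings through each heavy point (in expectation), the surviving configuration genuinely behaves like a planar graph with $O(n)$ edges. Getting the probability $p$ and the associated expectation estimates tight enough to land on the constant $6e$ — rather than some larger absolute constant — is the part that requires the most care, and is exactly where I would expect to reproduce the Fox--Pach computation rather than improvise.
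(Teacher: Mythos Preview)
Your proposal is correct and follows essentially the same route as the paper: Theorem~\ref{th:fox} is deduced from the edge bound of Lemma~\ref{lem:fox} (fewer than $3ekn$ edges), which gives $(6ek-1)$-degeneracy since every sub-family of a $k$-touching family is again $k$-touching, and the edge bound itself is obtained by a random-sampling argument in the spirit of the probabilistic proof of the Crossing Lemma, exactly as you outline. The paper does not reproduce the Fox--Pach computation in full (it cites \cite{FP10} for Lemma~\ref{lem:fox} and only adapts the technique in the special case $k=3$), so your sketch in fact says slightly more than the paper does about the mechanics of the sampling step.
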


Their theorem is a direct consequence of the following bound on the
number of edges in a graph represented by a set of $k$-touching
strings. Their proof is inspired by the probabilistic proof of the
Crossing Lemma.

\begin{lemma}[\cite{FP10}]\label{lem:fox}
Any graph represented by a set of $n$ $k$-touching strings has less than $3ekn$ edges.
\end{lemma}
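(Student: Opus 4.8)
The plan is to bound the number of edges in a graph $G$ represented by a set $\cS$ of $n$ $k$-touching strings by a probabilistic deletion argument in the spirit of the Crossing Lemma. First I would fix a drawing as in Observation~\ref{obs:dra}, so each string is a polygonal line and no endpoint of a string lies at a touching point. The key local observation is that at any touching point $p$, since at most $k$ strings pass through $p$ and none of them crosses, the strings through $p$ sit on one side of each other in a well-defined cyclic order; if we think of the contacts at $p$ as edges of $G$, each string participates in at most $\binom{k}{2}$ or so pairs at $p$, but more importantly, a single string can be involved in many contacts along its length. So directly one only gets a weak bound, and the point of the probabilistic step is to amplify it.

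Here is the probabilistic step I would carry out. Choose a probability $q\in(0,1]$ to be optimized, and select a random subset $\cS'\subseteq\cS$ where each string is kept independently with probability $q$. The induced subsystem is still $k$-touching (in fact at most $k$-touching), but I want to use a much cruder bound on the retained subsystem: if a graph is represented by $m$ strings that are $k$-touching, then trivially it has at most (something like) $km$ or $\binom{k}{2}\cdot(\text{number of touching points})$ edges --- the cleanest crude bound is that a $k$-touching system on $m$ strings has at most $(k-1)m$ "first contacts" or, after a more careful accounting at each touching point, some linear-in-$m$, linear-in-$k$ bound $E'\le c_0 k m$ with a small constant $c_0$. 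Actually the cleanest route is: orient/charge each contact to one of the two strings meeting there in a way that each string receives at most $k-1$ charges per touching point it lies on — but a string can lie on unboundedly many touching points, so this still does not immediately give a bound purely in terms of $m$ and $k$. The right crude bound to aim for is therefore one coming from planarity of the underlying "touching structure": the plane graph whose vertices are the touching points has bounded local structure, giving $E'\le C k m$ for an absolute constant $C$ (one can take $C$ around $2$ or $3$ after Euler-formula bookkeeping, counting that each touching point of multiplicity $t\le k$ contributes $\binom t2$ edges but the total multiplicity summed over touching points is linear in $m$).

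Then I would run the standard expectation computation. With each string kept with probability $q$, the expected number of retained vertices is $qn$ and the expected number of retained edges is $q^2 e(G)$ (an edge survives iff both its strings survive). Plugging into the crude bound $e(\cS')\le C k\,|\cS'|$ and taking expectations gives $q^2 e(G)\le Ck\cdot qn$, i.e. $e(G)\le Ckn/q$. This is useless as $q\to 1$; the amplification comes from the fact that we should instead pick $q$ as large as possible consistent with the crude bound being tight, but since the crude bound is linear (not quadratic) in $m$, the honest conclusion is $e(G)\le Ckn$ directly with $q=1$ and the probabilistic wrapper is unnecessary unless the crude bound is superlinear. So the real content — and the main obstacle — is establishing the sharp crude bound $e(G)<3ekn$ with the precise constant $3e$: one must analyze a single touching point of multiplicity $t\le k$, note it contributes at most $\binom t2$ edges, show that $\sum_p \binom{t_p}{2}$ over all touching points $p$ is at most (roughly) $\tfrac{k-1}{2}$ times the number of string-incidences-with-touching-points, and then bound the latter using planarity. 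I expect the delicate part to be squeezing the constant down to $3e$, which suggests Fox and Pach genuinely do need the probabilistic averaging: keep each string with probability $q=1/k$ (say), so the retained system is "essentially $1$-touching" in expectation — more precisely, the expected number of touching points of multiplicity $\ge 2$ in $\cS'$ is small — and a $1$-touching system of $m$ strings has at most a linear number of edges with a good constant. Balancing the contribution of high-multiplicity points against $q$ and optimizing $q$ (which is where a factor like $e=\lim(1+1/k)^k$ enters) yields the bound $e(G)<3ekn$. The hard part will therefore be the correct bookkeeping at touching points combined with choosing $q$ to make the surviving subsystem behave like a $1$-touching (hence sparse) system, and then optimizing.
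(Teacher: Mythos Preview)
The paper does not itself prove Lemma~\ref{lem:fox}; it is quoted from Fox and Pach~\cite{FP10}, with the remark that ``their proof is inspired by the probabilistic proof of the Crossing Lemma.'' The mechanism is, however, visible in the paper's own proof of Lemma~\ref{lem:k3}, so that is the natural point of comparison.

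Your proposal has the right overall shape (independent sampling of strings, then a deterministic bound on the sample) but it is missing the one idea that makes the argument work, and you end up talking yourself out of the probabilistic step before speculating your way partly back. The missing ingredient is that the base case is not some generic ``crude bound $E'\le C k m$'' but Observation~\ref{obs:pla}: a \emph{$2$-touching} system has a planar intersection graph, hence fewer than $3m$ edges. The sampling is engineered to reduce to exactly this case. For each edge $ab$ of $G(\cS)$ fix one touching point $P(a,b)$ witnessing it; at most $k-2$ other strings pass through $P(a,b)$. Keep each string independently with probability $p$. The event ``$a$ and $b$ are kept and every other string through $P(a,b)$ is discarded'' has probability at least $p^2(1-p)^{k-2}$, and the edges for which this event occurs form a planar graph on the sampled strings (their witnesses are now $2$-touching). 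Comparing expectations gives
\[
m\,p^{2}(1-p)^{k-2}\;<\;3\,pn,\qquad\text{i.e.}\qquad m\;<\;\frac{3n}{p(1-p)^{k-2}}.
\]
Taking $p=\tfrac{1}{k-1}$ and using $(1-\tfrac{1}{k-1})^{k-2}\ge e^{-1}$ yields $m<3e(k-1)n<3ekn$.

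Your attempts at a linear-in-$k$ crude bound before sampling cannot succeed with a good constant; that is precisely why the probabilistic step is needed. The crucial quantity you never write down is the survival probability $p^{2}(1-p)^{k-2}$ of an edge \emph{as a $2$-touching edge}, and the base case you never invoke is planarity at $k=2$ (with constant $3$, which is where the $3$ in $3ekn$ comes from). Your final paragraph (``$q=1/k$ so the retained system is essentially $1$-touching \ldots\ optimizing $q$ is where $e$ enters'') is heading in the right direction, but as written it is not a proof: you need $2$-touching (not $1$-touching), you need to fix a witness point per edge, and you need the explicit inequality above.
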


%% \begin{proof}
%% Let $\cS$ be a set of $n$ $k$-touching strings, and let $m$ be the
%% number of edges in the corresponding graph. For any two intersecting
%% strings $a$ and $b$, choose an arbitrary point $P(a,b)$ where $a$ and
%% $b$ intersect. We now select each string of $\cS$ uniformly at random,
%% with probability $p=\frac{1}{k}$. Let $\cS'$ be the set of selected
%% strings. For each pair of intersecting strings $a,b$ of $\cS$, the
%% probability that $a$ and $b$ were both selected and that none of the
%% other strings containing $P(a,b)$ were selected is at least
%% $p^2(1-p)^{k-2}$. It follows that the expectation of the number of such pairs is at
%% least $p^2(1-p)^{k-2}m$. Slightly modify each intersection point $x$ of
%% $\cS'$ contained in more than two strings by making all these strings
%% disjoint around $x$. The graph corresponding to this new set of
%% strings still has at least $p^2(1-p)^{k-2}m$ edges on average, and is
%% expected to contain $pn$ vertices. Since it is 2-touching, it is
%% planar and thus $p^2(1-p)^{k-2}m< 3pn$. Using that
%% $(1-\frac1k)^{k-2}\ge e^{-1}$ for any $k\ge 3$, we obtain that $m<3ekn$.
%% \end{proof}

%% \begin{proofof}\textbf{Theorem~\ref{th:fox}.}
%% If the intersection graph has less than $3ekn$ edges, its average degree
%% is less than $6e k$. Furthermore, since the class of graphs defined
%% by $k$-touching strings is closed under
%% taking induced subgraphs, the intersection graph is $6ek$-degenerate and
%% thus $(6ek+1)$-colorable.
%% \end{proofof}

When $k=3$, their proof can easily be optimized to show that the
number of edges is less than $12n$. Hence, every such graph has a
vertex with degree less than 24. These graphs are thus 23-degenerate
and have chromatic number at most 24. We now show how to modify their
proof to slightly improve this bound.

\begin{theorem}\label{th:k3}
Any 3-touching set of strings is 19-colorable.
\end{theorem}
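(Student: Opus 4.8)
The plan is to follow the strategy already sketched in the text: bound the number of edges of a graph represented by $n$ $3$-touching strings, conclude that such graphs are $d$-degenerate for a small $d$, and colour greedily. To beat the ``$12n$ edges, hence $23$-degenerate, hence $24$ colours'' observation, I would push the edge-count below $10n$ (so that the minimum degree is at most $19$, giving $19$-degeneracy and $19$ colours via a greedy argument on a vertex of degree $\le 18$, or alternatively $< 9.5n$ edges to get average degree $< 19$). Concretely, I would revisit the Fox--Pach argument of Lemma~\ref{lem:fox} specialised to $k=3$ and extract additional savings from the planarity considerations of Observation~\ref{obs:dra}.

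**First** I would set up the combinatorial model: by Observation~\ref{obs:dra}, assume every string is a polygonal line and no string endpoint is a contact point, and pass to the plane multigraph $H$ whose vertices are the contact points of $\cS$ and whose edges are the ``pieces'' of strings between consecutive contact points. Each contact point has at most $3$ strings through it, so locally the picture at a contact point is very restricted (two strings crossing-free touching, or three strings mutually touching in the $K_3$-like pattern of Figure~\ref{fig:strings}); I would enumerate these local types. For each edge $uv$ of $G(\cS)$ I would choose a witnessing contact point $p$ where strings $\cC_u$ and $\cC_v$ touch, and charge the edge to $p$; since at most $3$ strings pass through $p$, each contact point receives at most $\binom{3}{2}=3$ charges. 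The key quantitative step is to bound the number of contact points in terms of $n$: each string is a polygonal line, and I would use a discharging/Euler-formula argument on $H$ (a planar graph) together with the fact that each string, being a simple arc, contributes a bounded number of ``essential'' contact points per unit of topological complexity.

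**The main obstacle** is exactly this last bound — a single string can touch many others, so one cannot naively bound contact points by $O(n)$ without exploiting the crossing-free (touching, not crossing) hypothesis. The Fox--Pach proof handles this via a random sampling / Crossing-Lemma-style argument, and for $k=3$ one has to track the constants carefully; I expect the improvement from $12n$ to something below $10n$ to come from (i) using that the graph $H$ after removing degree-$\le 2$ vertices is simple planar, hence has at most $3|V(H)|-6$ edges, and (ii) observing that at a degree-$3$ contact point the three touching strings cannot all three pairwise ``see'' each other across $p$ in a way that forces three distinct edges unless the local configuration is the $K_3$ gadget, which is itself planar-limited. **Finally**, once the edge bound $|E(G(\cS))| < 10n$ (say) is established, $G(\cS)$ has a vertex of degree $\le 19$; the same bound applies to every subgraph (every subgraph of a $3$-touching-representable graph is itself $3$-touching-representable, by deleting strings), so $G(\cS)$ is $19$-degenerate and hence $19$-colourable by the standard greedy argument, removing a minimum-degree vertex at each step. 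I would present the refined edge-counting as a standalone lemma and then derive Theorem~\ref{th:k3} in one line.
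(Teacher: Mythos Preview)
Your overall architecture (edge bound $\Rightarrow$ degeneracy $\Rightarrow$ greedy colouring) matches the paper, but two concrete pieces are missing.

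First, the arithmetic: an edge bound strictly below $10n$ only yields a vertex of degree at most $19$, hence $19$-degeneracy and $20$ colours, not $19$. To get $19$ colours you need fewer than $9.5n$ edges so that the minimum degree is at most $18$; you mention this threshold in passing but then revert to ``$<10n$'' in your final paragraph, where the conclusion ``$19$-degenerate and hence $19$-colourable'' is off by one. The paper in fact proves $m \le \tfrac{6}{7}(6+\sqrt{22})\,n \approx 9.16\,n$, which just clears the $9.5n$ bar.

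Second, and more importantly, your proposed route to the edge bound does not contain the idea that makes it work. Charging edges to contact points and then bounding the number of contact points via Euler's formula on the plane graph $H$ runs straight into the obstacle you yourself flag: a single string can carry arbitrarily many contact points, so $|V(H)|$ is not controlled by $n$, and no amount of removing degree-$\le 2$ vertices fixes this. The paper's improvement from $12n$ to $9.16n$ does \emph{not} come from a sharper Euler count on $H$. It comes from a structural decomposition combined with a refined random sampling: (i) each $3$-touching point is first normalised so that one of the three strings is ``sandwiched'' between the other two; (ii) the edge set of $G(\cS)$ is split into $E_0$ (pairs having some contact where they are not the outer pair) and $E_1$ (pairs that are the outer pair at every common contact); (iii) a charging argument, in which each $E_1$-edge charges its sandwiched string and the charges received by a fixed string $c$ form the edge set of a planar graph on its $E_0$-neighbourhood, yields $7m_0 \ge m + 6n$; (iv) one samples each string with probability $p$ and locally flattens each surviving $3$-touching point into $2$-touching points in a way that always preserves $E_0$-adjacencies and preserves an $E_1$-adjacency whenever the sandwiched string was \emph{not} sampled, so the expected edge count is at least $m_0p^2 + m_1p^2(1-p)$; (v) comparing with the planar bound $3pn$ and optimising $p$ gives $9.16n$. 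Steps (ii)--(iv) are the heart of the argument and none of them appear in your sketch.
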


This is a direct consequence of the following lemma.

\begin{lemma}\label{lem:k3}
Any graph represented by a set of $n$ 3-touching strings has at most
$\tfrac67(6+\sqrt{22})n\approx 9.16 \,n$ edges.
\end{lemma}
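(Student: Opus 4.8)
The plan is to follow the probabilistic argument behind Lemma~\ref{lem:fox}, but to extract and optimize the constants in the $k=3$ case rather than using the generic bound $3ekn$. So suppose $G$ is represented by a set $\cS$ of $n$ $3$-touching strings, with $m$ edges; by Observations~\ref{obs:pla} and~\ref{obs:dra} we may take the strings to be polygonal lines with no endpoint of a string lying on an intersection point. The key structural fact we need is a ``bar'' or ``sparse subrepresentation'' statement: if at each point of the plane at most $2$ strings meet (i.e.\ the representation is $2$-touching), then the resulting graph is planar (Observation~\ref{obs:pla}), hence has at most $3n-6$ edges. The Fox--Pach idea is to sample each string independently with probability $p$; a point where originally $j\le k$ strings met survives as a point where $\ge 3$ sampled strings meet with probability at most $\binom{j}{3}p^3 \le \binom{k}{3}p^3$, and each surviving edge of $G$ lies in the sampled graph with probability $p^2$. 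For $k=3$ the only ``bad'' contact points are those with exactly $3$ strings, and such a point survives (as a $3$-fold point) only if all three of its strings are chosen, with probability $p^3$.

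Concretely, I would sample each string independently with probability $p$, obtaining a subfamily $\cS'$. Call a contact point of $\cS$ \emph{triple} if exactly three strings pass through it. Let $X$ be the number of sampled edges of $G$ and $Y$ the number of triple points all three of whose strings are sampled. Then $\mathbb{E}[X]\ge p^2 m$ — here one has to be slightly careful: an edge $uv$ of $G$ corresponds to a contact point shared by strings $u,v$, and it survives in $G(\cS')$ precisely when both $u,v\in\cS'$; this gives exactly $p^2$ per edge, so $\mathbb{E}[X]=p^2m$. Also $\mathbb{E}[Y]\le p^3 t$ where $t$ is the number of triple points, and crudely $t\le m/3$ since each triple point accounts for $\binom32=3$ edges of $G$ (this is the bound to double-check; if a triple point at $x$ on strings $u,v,w$ only forces the \emph{pairs} that actually touch there, we still get at most $3$ edges, and conversely a single pair can touch at several points, so $t\le m/3$ is safe but possibly wasteful). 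Deleting one string from each surviving triple point destroys all triple points, leaving a $2$-touching — hence planar — representation on at most $pn$ vertices (in expectation), so the number of its edges is at most $3pn$. Therefore
\[
\mathbb{E}[X] - \mathbb{E}[Y] \le \mathbb{E}[\text{edges of a planar subrepresentation}] \le 3\,\mathbb{E}[|\cS'|] = 3pn,
\]
which yields $p^2 m - p^3 (m/3) \le 3pn$, i.e.\ $m(p - p^2/3) \le 3n$, so $m \le \dfrac{3n}{p - p^2/3}$.

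Now I optimize over $p\in(0,1]$: the denominator $p - p^2/3$ is maximized at $p=3/2$, which is outside $(0,1]$, so on $(0,1]$ it is increasing and the best legal choice is $p=1$, giving $m \le 3n/(2/3) = 4.5n$ — which is \emph{better} than claimed, suggesting the bookkeeping above is too lossy to match the stated constant $\tfrac67(6+\sqrt{22})n$. The discrepancy tells me the real argument must be more careful about \emph{which} edges and triple points survive, and in particular cannot delete a whole string per triple point (that over-counts the damage, since one deleted string may kill many triple points at once but also kills many good edges). The honest version: when we delete one string per surviving triple point we remove, in expectation, at most (number of surviving triple points) strings, but each such deletion removes up to $\deg_{G(\cS')}$ edges, not just the three at that point; bounding this properly is where the constant $6+\sqrt{22}$ and the factor $\tfrac67$ enter. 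So the main obstacle — and the step I would spend the most care on — is the precise accounting of the edge loss incurred when destroying triple points: one must choose the deleted string at each triple point and bound the total number of edges lost across all such deletions in terms of $m$, $n$, and $p$, then reoptimize. I expect the clean way is to bound the number of edges incident to the deleted strings using the maximum multiplicity or a local charging argument, producing an inequality of the shape $p^2 m \le 3pn + (\text{const})\,p^3 m + (\text{lower-order})$, whose optimization over $p$ (now genuinely interior to $(0,1]$) gives $p$ irrational and the stated bound $m \le \tfrac67(6+\sqrt{22})n$. The derivation of Theorem~\ref{th:k3} is then immediate: such graphs are $(\lceil 2\cdot\tfrac67(6+\sqrt{22})\rceil-1)$-degenerate, and $2\cdot\tfrac67(6+\sqrt{22}) < 19$, so they are $19$-colorable.
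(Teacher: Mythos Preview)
Your proposal has a genuine gap, and the diagnosis you give for why the naive computation fails is not the real issue.

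First, the bound $t\le m/3$ is simply false. Three strings that pairwise touch contribute three edges to $G$, but nothing prevents them from sharing arbitrarily many triple points (just braid them through many common points). So $t$ can be much larger than $m/3$, and the inequality $p^2m-p^3(m/3)\le 3pn$ has no justification. Second, even setting this aside, the step ``$\mathbb{E}[X]-\mathbb{E}[Y]\le 3pn$'' does not follow from your deletion argument: removing one string per surviving triple point kills every edge incident to that string, not one edge, so the left-hand side should be $\mathbb{E}[X]$ minus the expected number of edges incident to deleted strings, which you have no control over. The suspiciously good $4.5n$ is an artifact of both errors, not of over-counting damage.

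The paper's argument is structurally different from a straight refinement of Fox--Pach. It first normalizes every triple point so that one string is \emph{sandwiched} between the other two, and then splits the edge set as $E=E_0\cup E_1$: an edge $ac$ lies in $E_1$ if at \emph{every} common point of $a$ and $c$ some third string is sandwiched between them, and in $E_0$ otherwise. A charging argument (each $E_1$-edge charges the sandwiched string at one chosen witness point, and the charges received by a fixed string form a planar graph on its $G_0$-neighbourhood) yields $7m_0\ge m+6n$. Now sample each string with probability $p$, and at every surviving triple point perform a local perturbation that preserves the two non-sandwiched adjacencies; this makes the sampled family $2$-touching (hence planar) while keeping every $E_0$-edge with probability $p^2$ and every $E_1$-edge with probability at least $p^2(1-p)$ (the event that the sandwiched witness at its chosen point is not sampled). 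Planarity then gives
\[
p^2\bigl(m_0p+m(1-p)\bigr)\le 3pn,
\]
and combining with $7m_0\ge m+6n$ and optimizing at $p=3-\sqrt{11/2}$ yields $m\le\tfrac67(6+\sqrt{22})\,n$. The two ingredients you are missing are the $E_0/E_1$ decomposition and the local modification that turns triple points into $2$-touching points \emph{without deleting strings}; these are what produce an inequality with an honest interior optimum and the constant $\tfrac67(6+\sqrt{22})$.
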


\begin{proof}
Let $\cS$ be a set of $n$ 3-touching strings, and let $m$ be the
number of edges in the corresponding intersection graph $G$. By
Observation~\ref{obs:dra}, we can assume that each string of $\cS$ is
a polygonal line (a union of finitely many segments) and
 that no endpoint of a string of $\cS$ coincides with an intersection between strings of $\cS$.
Consider a point $p$ contained in three different
strings $s_0,s_1,s_2$. Then there is a small disk $D$ centered in $p$ that only intersects the strings $s_0,s_1,s_2$, and is such that the
boundary $C$ of $D$ intersects each $s_i$ (for $i=0,1,2$) in exactly two points,
say $p_i,p_i'$. Assume that walking around $C$ in clockwise order, we
see $p_0,p_0',p_1,p_1',p_2,p_2'$. Then for $i=0,1,2$, we replace $s_i\cap D$
by a new string $s'_i$ between $p_i$ and $p_i'$ as
follows. For each $i=0,1,2$, let $q_i$ be a point of $D$ that is after
$p_i'$ and before $p_{i+1}$ in clockwise order (with indices modulo
3). For two points $x,y$, let $S(x,y)$ denote the (straight-line) segment
between $x$ and $y$. Then we replace $s_i\cap D$ (the portion of $s_i$ between $p_i'$
and $p_i$) by the concatenation of $S(p_i',q_i)$, $S(q_i,q_{i-1})$,
and $S(q_{i-1},p_i)$ (see Figure~\ref{fig:sandwich}). Note that the resulting set of strings is still
3-touching, and the intersection graph remains unchanged. Repeating this
operation if necessary, we can assume without loss of generality that for any three strings
$s_0,s_1,s_2$ as above, we see $p_0,p_0',p_1',p_2',p_2,p_1$ when
walking around $C$ in clockwise order. In this case we say that $s_1$
is ``sandwiched'' between $s_0$ and $s_2$ at $p$ (see Figure~\ref{fig:transform}, left). 

\begin{figure}[htbp]
\begin{center}
\includegraphics[scale=1.4]{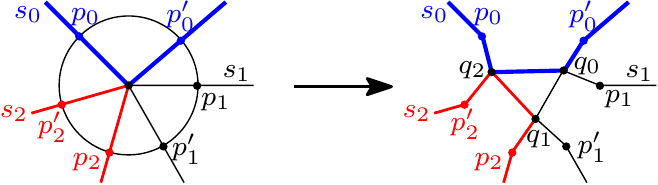}
\caption{A local modification turning one 3-touching point into
  three 2-touching points,
  without changing the intersection graph. \label{fig:sandwich}}
\end{center}
\end{figure}

We now define two spanning subgraphs $G_1$ and $G_0$ of $G$ as follows.
Two strings $a$ and $c$ are adjacent in $G_1$ if they intersect and for every
intersection point $p$ of $a$ and $c$, there exists a third string $b$ that is 
sandwiched between $a$ and $c$ at $p$. For every pair of strings $a$
and $c$ adjacent in $G_1$, let $P_1(a,c)$ be an arbitrarily chosen
intersection point between $a$ and $c$. By definition, there is a
unique string that is sandwiched between $a$ and $c$ at $P_1(a,c)$. 

Two strings $a$ and $c$ are adjacent in $G_0$ if they are adjacent in $G$ but not in $G_1$,
i.e., if there exists an intersection point $P_0(a,c)$ of $a$ and $c$
such that either $P_0(a,c)$ is not contained in another string or $P_0(a,c)$ is  contained in a third string $b$ that is not sandwiched between $a$ and $c$. For $i=0,1$, the edge-set of $G_i$ is denoted by $E_i$, and the cardinality of $E_i$ is denoted by $m_i$.
%
%
%For any intersection point $p$, we now define a notion of distance (relative to $p$) on the
%strings intersecting $p$: if the strings are $a,b,c$ in this order
%(with $b$ sandwiched between $a$ and $c$), we say that $a$ and $b$
%(resp. $b$ and $c$) are at distance 0, while $a$ and $c$ are at
%distance 1. If $p$ is only contained in two strings, then their
%distance (relative to $p$) is also said to be 0.
%
%For any two intersecting strings $a$ and $b$ their (absolute) distance
%is the minimum, among their intersection points $p$, of their distance
%relative to $p$. Let $P(a,b)$ be one of the points where this minimum
%is realized. The set of
%edges $uv$ where $u,v$ are at distance $i$ ($i=0,1$) is denoted by $E_i$,
%and the cardinality of $E_i$ is denoted by $m_i$.

\begin{claim}\label{cl1}
$7m_0\ge m+6n$
\end{claim}

For each edge $ab \in E_1$, assume that $ab$ gives a charge of 1 to
the unique string $c$ sandwiched between $a$ and $b$ at $P_1(a,b)$. Let
$N_0(c)$ be the set of neighbors of $c$ in $G_0$. The total charge $\rho(c)$ received by $c$ is at most the number
of pairs of vertices $x,y \in N_0(c)$ such that in $\cS\setminus
\{c\}$, $P_0(x,y)$ is a 2-contact point. If we modify the set of strings
intersecting $c$ so as to only preserve those 2-touching points (all
the other pairs of strings are made disjoint), we obtain a planar graph
with vertex-set $N_0(c)$ and with at least $\rho(c)$ edges. It follows that
$\rho(c)\le 3|N_0(c)|-6$. Summing for all strings $c$, we obtain that
the total charge $\sum_{c \in \cS} \rho(c)=m_1\le 6 m_0 - 6n$. Since
$m_0+m_1=m$, we have $7m_0\ge m+6n$, as claimed.

\medskip

\begin{figure}[htbp]
\begin{center}
\includegraphics[scale=1]{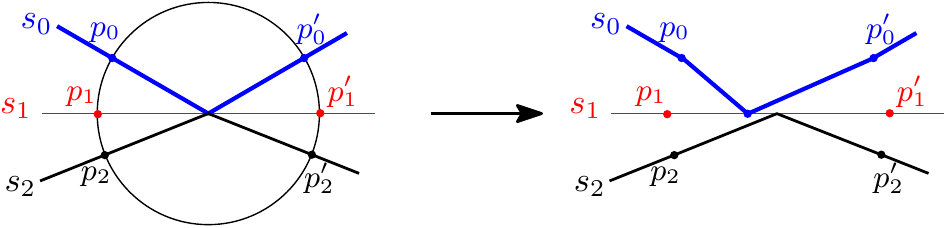}
\caption{A local modification preserving adjacency in the graph $G_0$. \label{fig:transform}}
\end{center}
\end{figure}

We are now ready to prove the lemma. We select each string of $\cS$
uniformly at random with probability $p$ (to be chosen later). In the
subset of chosen strings, we slightly modify each 3-touching point $x$
as
follows. There is a small disk $D$ with boundary $C$
centered in $x$ that only intersects the three strings $s_0,s_1,s_2$ containing
$x$, and (with the same notation as before), we can assume without loss of generality that we see $p_0,p_0',p_1',p_2',p_2,p_1$ when
walking around $C$ in clockwise order. Then for some point $q$ on
$s_1$, close to $x$, we replace $s_0\cap D$ by the concatenation
of the segments $S(p_0,q)$ and $S(q,p_0')$ (see Figure~\ref{fig:transform}). Note that this preserves all edges
in $G_0$. Let $\cS'$ be the new set of strings.  For each edge
$ab\in E_0$, the probability that $ab$ appears as an edge in $\cS'$ is
$p^2$, and for each edge $ab\in E_1$, the probability that $ab$
appears as an edge in $\cS'$ is at least the probability that $a$ and
$b$ were both selected and the string $c$ sandwiched between $a$ and
$b$ at $P_1(a,b)$ was not selected, which is $p^2(1-p)$. The expected
number of vertices of the graph represented by $\cS'$ is then $pn$ and
its expected number of edges is at least
%% $$m_0 p^2+m_1p^2(1-p)\ge
%% \tfrac{p^2}7(m+6n+(1-p)(6m-6n))=\tfrac{p^2}7(m(7-6p)+6pn).$$ 
%%
%%  ! We don't have $7m_1\ge 6m-6n$ !

\begin{eqnarray*}
m_0 p^2+m_1p^2(1-p) & = & p^2 \left(m_0 + m_1 (1-p)\right)\\
& = & p^2 \left(m_0 p + m (1-p)\right)\\
& \ge & \tfrac{p^2}7\left((m+6n) p + 7m (1-p) \right)\\
& \ge & \tfrac{p^2}7(m(7-6p)+6pn)\\
\end{eqnarray*}

By
Observation~\ref{obs:pla}, the graph is planar and therefore,
$\tfrac{p^2}7(m(7-6p)+6pn)\le 3pn$. This can be rewritten as $m\le
n\,\frac{21-6p^2}{p(7-6p)}$. Taking $p=3-\sqrt{11/2}$, we obtain
$m\le \tfrac67(6+\sqrt{22})n$.
\end{proof}

The ideas of Lemma~\ref{lem:k3} can be used to slightly improve the
multiplicative constant in Theorem~\ref{th:fox} for all $k$. Since our
improvement is minor (we obtain a bound of $6k\times 2.686$ instead of
$6k\times 2.718$), we omit the details.

\begin{figure}[htbp]
\begin{center}
\hspace{0.05cm}
\subfigure[\label{fig:exstring1}]{\includegraphics[scale=0.6]{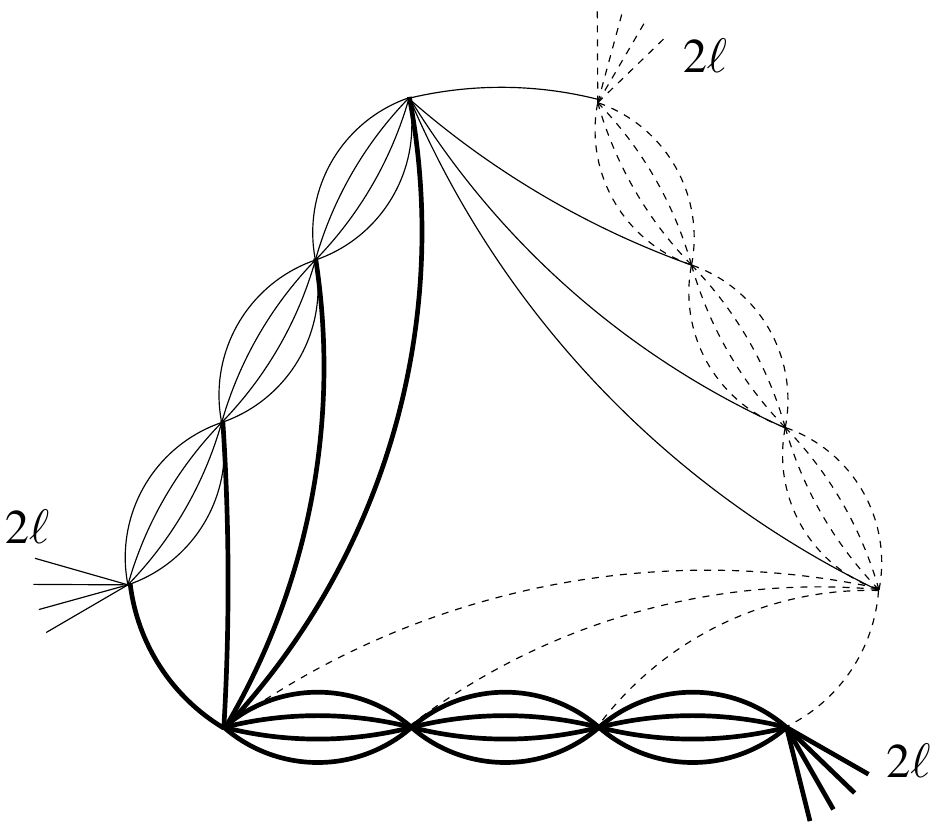}}
\hspace{1cm}
\subfigure[\label{fig:exstring2}]{\includegraphics[scale=0.6]{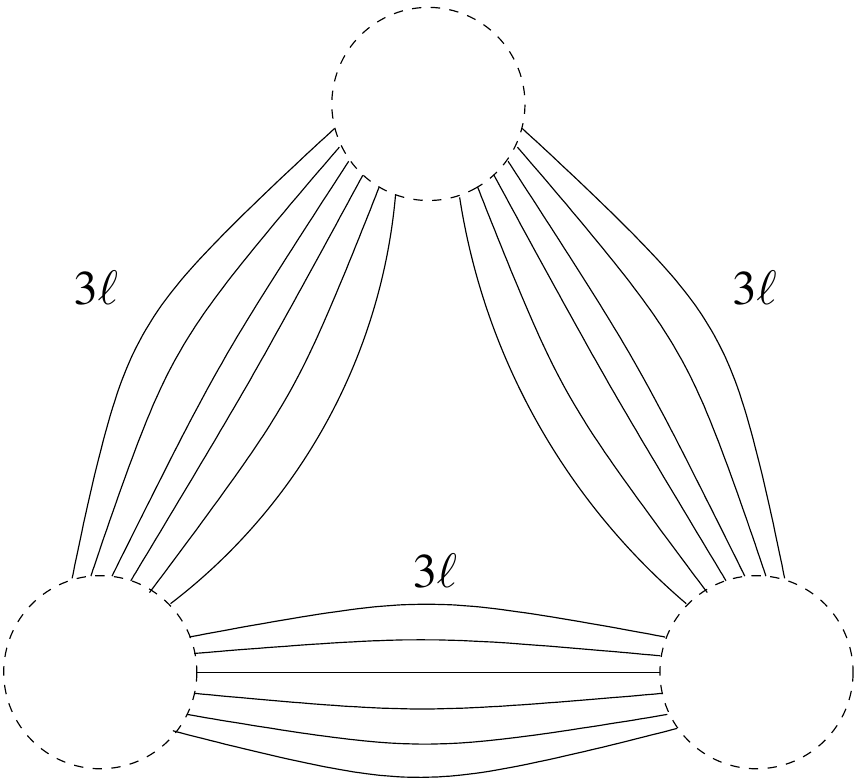}}
\caption{(a) The construction of a $2\ell$-sun. (b) A set $\cS$ of
  $k$-touching strings requiring $\lceil \frac{9 k}2 \rceil -5$
  distinct colors (each dashed circle represents a $2\ell$-sun). \label{fig:exstring}}
\end{center}
\end{figure}

We now show that the constant $c$ in Conjecture~\ref{conj:multi} is
at least $\frac92$. 

\begin{theorem}
For every odd $k\ge 1$, $k=2\ell+1$, there exists a set of $k$-touching
strings $\cS_k$ such that the strings of $\cS_k$ pairwise touch and
such that $|\cS_k|= 9\ell = 9(k-1)/2$. Thus $\chi(\cS_k) = 9\ell = \lceil
\frac{9 k}2 \rceil -5$.
\end{theorem}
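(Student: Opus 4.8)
The target is a lower bound construction: for every odd $k = 2\ell+1$, build a $k$-touching set $\cS_k$ of $9\ell$ strings that pairwise touch, so that $G(\cS_k) \cong K_{9\ell}$ and hence $\chi(\cS_k) = 9\ell$. The final equality $9\ell = \lceil \tfrac{9k}{2}\rceil - 5$ is just arithmetic: with $k = 2\ell+1$ we have $\tfrac{9k}{2} = 9\ell + \tfrac{9}{2}$, so $\lceil \tfrac{9k}{2}\rceil = 9\ell + 5$. The real work is the geometric construction, and the picture in Figure~\ref{fig:exstring} tells us the intended building block is a ``$2\ell$-sun''.

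\textbf{Step 1: the $2\ell$-sun gadget.} First I would define a $2\ell$-sun as a small cluster of $2\ell$ strings, all passing through (or clustering near) a common point, arranged so that the cluster presents a bunch of ``rays'' sticking out that can be bundled and routed elsewhere. The key local constraint is that inside the sun no point lies on more than $k = 2\ell+1$ strings, which leaves a budget of exactly one extra string allowed to pass through the center region; in practice one perturbs the $2\ell$ strings (as in the sandwiching trick of Lemma~\ref{lem:k3}) so that they pairwise touch within the sun but at most $k$ meet at any single point, using the slack of one. I would verify that the $2\ell$ strings of a single sun pairwise touch and that the sun still has one ``unit of capacity'' free at its center for a string passing through.

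\textbf{Step 2: assemble the suns.} The outer structure (Figure~\ref{fig:exstring2}) uses several $2\ell$-suns placed around a cycle, with long strings threading from one sun through the centers of the others, so that a string belonging to sun $A$ can be made to touch every string belonging to sun $B$ by routing its tail through the center of sun $B$ (and vice versa). I would count: if there are $9\ell$ strings total, grouped so that the pairwise-touching property holds both within each sun (Step~1) and across suns (by the routing), we get $G(\cS_k) \cong K_{9\ell}$. The arithmetic $9\ell = 9(k-1)/2$ and the comparison with Hlin\v en\'y-type and bar-visibility constructions suggests the configuration is built from a constant number of suns — plausibly from a "triangle of suns" arrangement, each sun contributing $2\ell$ strings plus a few extra connector strings, totalling $9\ell$. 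The bookkeeping is: (i) within a sun, $\binom{2\ell}{2}$ touchings realised with local capacity $\le k$; (ii) each connector string touches all strings in each sun it threads; (iii) globally no point gets more than $k$ strings — the tightest points being sun centers, where $2\ell$ sun-strings plus exactly one through-string gives $2\ell+1 = k$.

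\textbf{Step 3: count colors.} Once $G(\cS_k) = K_{9\ell}$ is established, $\chi(\cS_k) = 9\ell$ is immediate, and the stated closed form follows from the arithmetic above. The main obstacle is Step~2: making precise the routing that lets strings from different suns all pairwise touch \emph{while} keeping every point — especially the congested sun centers and the corridors where connectors run in parallel — below the $k = 2\ell+1$ threshold. This is where one must be careful that a corridor carrying several connector strings does not accidentally create a point of multiplicity $> k$; the fix is to keep the connectors pairwise touching only at isolated points (again via the sandwiching perturbation of Lemma~\ref{lem:k3}) rather than running them bundled, and to route each connector through a sun center one at a time using the single free unit of capacity there. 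Verifying these capacity constraints at every point is the crux; the rest is drawing and arithmetic.
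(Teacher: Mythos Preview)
Your plan misreads both the gadget and the gluing, and the capacity argument in Step~2 cannot close. A $2\ell$-sun is not a cluster of $2\ell$ strings near one center with one slot to spare: it contains $6\ell$ strings, built from three $2\ell$-\emph{braids}. A $2\ell$-braid is a set of $2\ell$ strings all passing through the same $2\ell$ points $c_1,\ldots,c_{2\ell}$ in the same order (so every pair already touches and every $c_j$ is $2\ell$-touching); the sun is formed by arranging three braids in a triangle and extending each string of braid $\cS_i$ to end at a distinct $c_j$ of braid $\cS_{i+1}$, which raises those points to $2\ell+1=k$ and makes all $6\ell$ strings pairwise touch while leaving each string with one free end on the outer face. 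Three suns are then combined not by threading connectors but by \emph{identifying} strings along their free ends: the $6\ell$ free ends of each sun are split into two batches of $3\ell$, and each batch is glued string-by-string to a batch from an adjacent sun. This yields $(3\times 6\ell)/2 = 9\ell$ final strings, each living in exactly two suns; any two final strings share a sun and hence touch there, and the gluing creates no new high-multiplicity point, so the system stays $k$-touching.

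Your proposed mechanism---route the tail of a string from sun $A$ through the center of sun $B$ using the single free unit of capacity---cannot produce pairwise touching: to make every string of $A$ touch every string of $B$ you would need all $2\ell$ strings of $A$ (and likewise those of the third sun) to pass through $B$'s center, not one, which immediately violates the $k$-bound. The two ideas missing from your outline are exactly what dissolve this obstruction: many shared points per braid rather than a single center, and identification of strings rather than separate connector strings.
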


\begin{proof} Consider $n$ touching strings $s_1,\ldots,s_n$ that all intersect
$n$ points $c_1,\ldots, c_n$ in the same order (see the set of bold
strings in Figure \ref{fig:exstring1} for an example when $n=4$), and
call this set of strings an \emph{$n$-braid}. For some $\ell>0$, take
three $2 \ell$-braids $\cS_1$, $\cS_2$, $\cS_3$, and for
$i=1,2,3$, connect each of the strings of $\cS_i$ to a different
intersection point of $\cS_{i+1}$ (with indices taken modulo 3),
while keeping the set of strings touching (see Figure
\ref{fig:exstring1}). We call this set of touching strings a
\emph{$2\ell$-sun}. Observe that a $2\ell$-sun contains $6\ell$ strings that pairwise intersect, and that each intersection point
contains at most $2\ell+1$ strings. Moreover, each of the $6\ell$
strings has an end that is incident to the infinite face.

We now consider three $2\ell$-suns $\cR_1,\cR_2,\cR_3$. Each of them
has $6\ell$ strings with an end incident to the outerface. For each $i=1,2,3$, we arbitrarily divide
the strings leaving $\cR_i$ into two sets of $3\ell$ consecutive strings, say $R_{i,i+1}$
and $R_{i,i-1}$. For each $i=1,2,3$, we now take the strings of
$R_{i,i+1}$ and $R_{i+1,i}$ by pairs (one string in $R_{i,i+1}$, one string
in $R_{i+1,i}$), and identify these two strings into a single
string. This can be made in such a way that the resulting set of
$(6\times 3\ell)/2=9\ell$ strings is still $(2\ell+1)$-touching (see Figure \ref{fig:exstring2}, where the three
$2\ell$-suns are represented by dashed circles, and only the portion
of the strings leaving the suns is displayed for the sake of clarity). Hence we obtain a
$k$-touching set of $\lceil \frac{9 k}2 \rceil -5$ strings that
pairwise intersect, as desired.
\end{proof}

\section{$\mu$-intersecting strings}\label{sec:intersect}

Let $\cS$ be a $k$-touching set of strings. The set $\cS$ is said to
be \emph{$\mu$-intersecting} if any two strings intersect in at most
$\mu$ points. We denote by $H(\cS)$ the multigraph associated to
$\cS$: the vertices of $H(\cS)$ are the strings of $\cS$, and two
strings with $t$ common points correspond to two vertices connected by
$t$ edges in $H(\cS)$. Note that the intersection graph $G(\cS)$ of
$\cS$ is
the simple graph underlying $H(\cS)$.

We prove the following result (which only supersedes
Theorem~\ref{th:fox} for $\mu \le 5$).

\begin{theorem}\label{th:chro}
Any $k$-touching set $\cS$ of $\mu$-intersecting strings can be
properly colored with $3 \mu k$ colors.
\end{theorem}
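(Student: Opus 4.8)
The plan is to reduce the statement to an upper bound on the number of edges of $G(\cS)$. Since the class of $k$-touching $\mu$-intersecting sets of strings is closed under deleting strings, it is enough to show that every such $\cS$ with $\cS\neq\emptyset$ satisfies $|E(G(\cS))|\le\tfrac{3\mu k-1}{2}\,|\cS|$: then some string meets at most $3\mu k-1$ others, this holds recursively, so $G(\cS)$ is $(3\mu k-1)$-degenerate, and a greedy colouring along a degeneracy ordering uses at most $3\mu k$ colours. (Alternatively, when $\mu$ is large one can just invoke Lemma~\ref{lem:fox}, since $3ek<\tfrac{3\mu k}{2}$ once $\mu>2e$; the interest of Theorem~\ref{th:chro} is the small-$\mu$ range, so I would aim for a uniform edge bound.)

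For the edge bound I would first invoke Observations~\ref{obs:pla} and~\ref{obs:dra} to assume the strings are polygonal with no endpoint at a contact point. Then I would normalise the contact points: at any point $p$ lying on $j\ge 3$ strings the $j$ arcs through $p$ are pairwise non-crossing, and, generalising the local move of Figure~\ref{fig:sandwich}, I would assume without changing $G(\cS)$ that they are \emph{nested} at $p$, meaning the strings through $p$ can be labelled $s_1,\dots,s_j$ so that a small circle around $p$ meets their arcs in the cyclic order $s_1,s_2,\dots,s_{j-1},s_j,s_j,s_{j-1},\dots,s_2,s_1$. Next I would \emph{resolve} every such $p$ into the $j-1$ nested $2$-touching points $s_i\cap s_{i+1}$, $1\le i\le j-1$ (exactly as in Figure~\ref{fig:sandwich} when $j=3$). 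The resulting set $\cS'$ is $2$-touching, so by Observation~\ref{obs:pla} $G(\cS')$ is planar and has at most $3|\cS|-6$ edges; moreover $G(\cS')\subseteq G(\cS)$. It then remains to bound the number of \emph{lost} edges $\{s,t\}\in E(G(\cS))\setminus E(G(\cS'))$, that is, pairs that at every common point meet only as two non-consecutive strings of a nested point.

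Bounding the lost edges is the heart of the matter, and is where I expect the real difficulty to lie. A single nested point on $j\le k$ strings creates at most $\binom{j-1}{2}\le\binom{k-1}{2}$ lost edges, so the $k$-touching hypothesis controls the per-point contribution; the role of the $\mu$-intersecting hypothesis is to make the total weight of the nested points linear in $|\cS|$. Concretely I would charge each lost edge $\{s,t\}$, witnessed at a nested point $p$, to whichever of $s,t$ lies deeper (closer to $s_j$) at $p$, so that a string $w$ of nesting depth $d$ at $p$ receives $d-2$ charges there; then I would run a discharging argument bounding $\sum_p\sum_w (d_p(w)-2)^+$ by re-applying Euler's formula to the planar incidence structure formed by the strings, their contact points and their pieces, using that each fixed string is ``outer than $w$'' at a number of points bounded in terms of $\mu$ because any two strings meet at most $\mu$ times. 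The target is a bound of the form $\bigl(\tfrac{3\mu k-1}{2}-3\bigr)|\cS|$ on the lost edges; combined with the planar $3|\cS|-6$ this yields the edge bound, and hence the theorem. If this accounting turns out too wasteful, a safer but lossier route is to resolve nested points only one further level deep (into overlapping $3$-touching windows, keeping all pairs at nesting distance $\le 2$) and recurse, trading a constant factor in $k$ or $\mu$ for a simpler count.
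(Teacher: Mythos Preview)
Your reduction to an edge bound and the resolution of each $j$-fold contact into $j-1$ nested $2$-touching points are exactly what the paper does. But the proof you submit is incomplete: the ``heart of the matter'' (bounding the lost edges by a discharging argument) is only sketched, and the sketch does not make clear how $\mu$ enters. In particular, charging a lost edge to the deeper string and then appealing to ``a number of points bounded in terms of $\mu$'' is not enough: a string can be outer to $w$ at up to $\mu$ points, but that only says each lost \emph{pair} is witnessed at $\le \mu$ points, which is the wrong direction for an upper bound.

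The paper avoids the kept/lost dichotomy entirely, and the argument is a two-line computation once you look at the right quantity. Count edges in the \emph{multigraph} $H(\cS)$ rather than the simple graph: every contact point $c$ on $d(c)$ strings contributes exactly $\binom{d(c)}{2}$ multi-edges, so
\[
|E(H(\cS))|=\sum_c \binom{d(c)}{2}\le \frac{k}{2}\sum_c\bigl(d(c)-1\bigr)=\frac{k}{2}\,N',
\]
where $N'=\sum_c (d(c)-1)$ is precisely the number of $2$-touching points of your resolved family $\cS'$, i.e.\ $N'=|E(H(\cS'))|$. Now the $\mu$-intersecting hypothesis is used, not on $\cS$, but on $\cS'$: the resolution preserves $\mu$-intersection, so $H(\cS')$ is a planar multigraph of edge multiplicity at most $\mu$, whence $N'\le (3n-6)\mu$. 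Combining, $|E(G(\cS))|\le |E(H(\cS))|<\tfrac{3}{2}\mu k n$, and the degeneracy/colouring follow. The step you were missing is that $\mu$ should be spent on the \emph{multiplicity} of the planar graph $H(\cS')$, not on a per-string discharging; once you see this, there are no lost edges to account for at all.
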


Again, the proof is based on an upper bound on the number of edges of such
graphs.

\begin{lemma}\label{lem:sbsup}
If $\cS$ is a $k$-touching set of $n$ $\mu$-intersecting strings,
then $H(\cS)$ (and so, $G(\cS)$) has less than $\frac{3}2 \mu k n$ edges.
\end{lemma}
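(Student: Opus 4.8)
\textbf{Proof plan for Lemma~\ref{lem:sbsup}.}
The plan is to bound the number of edges of $H(\cS)$ by charging each edge to a portion of a string and showing these portions can be used to build planar structures. First I would invoke Observation~\ref{obs:dra} to assume every string is a polygonal line and no endpoint of a string lies on an intersection point. Each intersection point of $\cS$ is contained in at most $k$ strings, and at such a point $p$ the strings passing through $p$ locally partition a small disk into ``sectors''; the key quantity is that the number of pairs of strings meeting at $p$ is at most $\binom{k}{2}$, but I want something linear in $k$, so instead I would charge in a more local way: walk along a single string $s$ and look at its successive intersection points with other strings.

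The main idea I would pursue is the following orientation/charging argument. Fix one string $s$ and traverse it from one endpoint to the other. Each time $s$ meets another string $t$ at a point $p$, since the strings only touch (do not cross), $s$ arrives at $p$ and leaves $p$ on the same side of $t$ near $p$ (this is exactly the ``touching'' condition), so locally $t$ behaves like a boundary that $s$ bounces off. At each such point $p$, among the at most $k$ strings through $p$, consider the cyclic order in which they enter and leave a small disk around $p$; because no two cross, this cyclic order consists of $\le k$ nested/parallel ``strands,'' and consecutive strands along the boundary circle can be matched up into at most $k-1$ ``touching pairs'' that are locally adjacent (appear consecutively). I would assign to each edge $e=st$ of $H(\cS)$, for each of its intersection points $p$, the fact that $s$ and $t$ are locally consecutive among the strands at $p$ on at least one side, and charge $e$ to the ``gap'' between $s$ and $t$ at $p$. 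Summing, each intersection point of multiplicity $j\le k$ hosts at most $j-1\le k-1$ edge-incidences of this type along one fixed side; doing this carefully on both sides and over all points, and using that $H(\cS)$ with the touching structure can be realized so that these local gaps assemble into a planar multigraph, I would get that the total count is at most roughly $\tfrac32 k$ times the number of strings. Concretely, I would build an auxiliary planar (multi)graph whose vertices are the strings and whose edges are ``consecutive-strand touchings'' weighted by $\mu$ per pair (since two strings meet at most $\mu$ times), apply Euler's formula to get $\le 3n-6$ on the simple planar skeleton, and multiply the degree-type bound by the factors $k$ and $\mu$ to reach $\tfrac32\mu k n$.

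Alternatively — and this is probably the cleaner route — I would mimic the pseudo-disk argument mentioned in the introduction: slightly ``fatten'' each string $s$ into a thin pseudo-disk $\cR_s$ so that two fattened strings intersect iff the original strings do, each intersection point of multiplicity $j$ becomes a region covered by exactly $j$ pseudo-disks, and the planar hypergraph $\mathcal H$ of these pseudo-disks has each point in at most $k$ of them. The number of edges of $G(\cS)$ is then at most the number of edges of $G(\mathcal H)$, and for a planar hypergraph in which every point lies in at most $k$ pseudo-disks one has the bound (edges) $<\tfrac32 k n$ coming from the fact that the ``cyclic'' planar structure has at most $3n-6$ ``faces'' each of which can be charged $\le \tfrac k2$ pairs; multiplying by $\mu$ for the multigraph $H(\cS)$ (two strings contribute at most $\mu$ parallel edges, and the $\mu$ touching points can be routed so each sits in a distinct local face) yields $|E(H(\cS))| < \tfrac32\mu k n$.

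The step I expect to be the main obstacle is making the charging at a high-multiplicity intersection point rigorous while keeping the bound linear in $k$ rather than quadratic: a priori $\binom k2$ pairs of strings meet at a $k$-fold point, and the whole game is to argue that, because the strings only touch, each such point can be ``resolved'' (as in the sandwiching move of Lemma~\ref{lem:k3}) into $O(k)$ ordinary $2$-touching points without changing the intersection graph, so that the resulting configuration is governed by a planar multigraph of maximum-degree type $O(k)$. Once that resolution is in place, Observation~\ref{obs:pla} (or directly Euler's formula) finishes the count, and the extra factor $\mu$ is bookkeeping for parallel edges; so I would spend most of the write-up on the local resolution lemma and treat the global summation as routine.
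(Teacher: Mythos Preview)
Your closing paragraph correctly identifies the crux --- resolving a $k$-fold touching point into simple $2$-touching points --- but the specific claim you make there is false, and this is where the plan breaks. You write that a $k$-fold point can be resolved into $O(k)$ ordinary $2$-touching points \emph{without changing the intersection graph}. That is impossible in general: a single $k$-fold point already contributes $\binom{k}{2}$ pairs of intersecting strings, so any resolution preserving all adjacencies must create at least $\binom{k}{2}$ contact points, not $O(k)$. Worse, even $\binom{k}{2}$ points need not suffice while keeping the strings non-crossing: at a $3$-fold point in the ``sandwiched'' configuration (precisely the one arranged in Lemma~\ref{lem:k3}), the two outer strings are separated by the middle one, so they cannot be made to touch locally without crossing it. This is exactly why the $3$-touching $1$-intersecting representation of $K_7$ in Figure~\ref{fig:sbinf1} does not collapse to a planar graph. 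Your two earlier sketches (consecutive-strand charging, and fattening to pseudo-disks) are too loose to pin down, but both implicitly rely on the same false premise that the $\binom{k}{2}$ pairs at a point can be accounted for by an $O(k)$-size planar gadget.

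The paper's proof uses the resolution idea in the opposite direction: it peels strings off one at a time (Figure~\ref{fig:muinter}), turning a point $c$ of multiplicity $d(c)$ into exactly $d(c)-1$ ordinary $2$-touching points. This \emph{does} lose edges --- the resulting family $\cS'$ has a smaller intersection graph --- but it is always possible, and $\cS'$ remains $\mu$-intersecting. One then does not compare $G(\cS)$ with $G(\cS')$ directly; instead one counts: the number of edges of $H(\cS)$ is exactly $\tfrac12\sum_c d(c)(d(c)-1)$, and since $d(c)\le k$ this is at most $\tfrac{k}{2}\sum_c (d(c)-1)=\tfrac{k}{2}N'$, where $N'$ is the number of $2$-touching points of $\cS'$. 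Because $\cS'$ is $2$-touching and $\mu$-intersecting, $H(\cS')$ is a planar multigraph of multiplicity at most $\mu$, so $N'\le(3n-6)\mu$, and the bound follows. The missing idea in your plan is this decoupling: resolve to something you \emph{can} count by planarity (accepting that edges are lost), and recover the original edge count via the inequality $d(c)(d(c)-1)\le k\,(d(c)-1)$, rather than trying to preserve the intersection graph through the resolution.
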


\begin{proof}
Let $n$ denote the number of strings of $\cS$, and let $N$ denote the
number of intersection points of $\cS$. Let us denote
by $d(c)$ the number of strings containing an intersection point $c$
(for any $c$, $2 \le d(c) \le k$ by definition). 

%% Recall that by
%% Observation~\ref{obs:sandwich}, we can assume that all intersection
%% points are sandwich points (this assumption is not necessary, but it helps
%% making things clearer).
%%
%% En fait non !

\medskip

 By
Observation~\ref{obs:dra}, we can assume that each string of $\cS$ is
a polygonal line (a union of finitely many segments) and that no 
endpoint of a string of $\cS$ coincides with an intersection between 
strings of $\cS$.
Let us slightly modify $\cS$ in order to obtain a set $\cS'$ of
2-touching and $\mu$-intersecting strings. For that, repeat the
following operation while there exists an intersection point $c$ with
$d(c)>2$. There is a small disk $D$ centered in $c$ such that
$D$ only intersects the strings containing $c$, and for any such string
$s$, $s\cap D$ is the union of two segments. Pick a string $s_1$
containing $c$ such that the angle between its two
segments at $c$ is minimal, and let $s_2$ be a string containing
$c$, distinct from $s_1$, such that a face $f$ of $D-\cS$ is bounded by $s_1$, $s_2$ and the
boundary of $C$. Let $p_1,p_1'$ be the intersection of $s_1$ and the
boundary of $D$, and let $q$ be a point of $s_2\cap f$ distinct from
$c$ (and close to $c$). Then we replace $D\cap s_1$ by the concatenation of the
straight-line segments $S(p_1,q)$ and $S(q,p_1')$ (see Figure~\ref{fig:muinter}). This is similar to
the second modification used in Theorem~\ref{th:k3} and illustrated in
Figure~\ref{fig:transform}, which was restricted to the case where $c$
is contained in exactly three strings.

\begin{figure}[htbp]
\begin{center}
\includegraphics[scale=1]{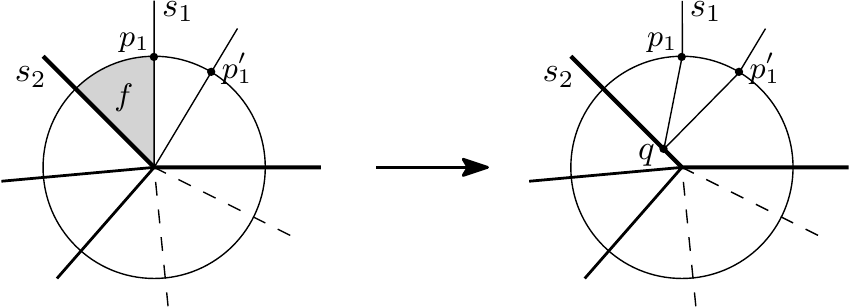}
\caption{A local modification reducing the number of strings
  containing a given point. \label{fig:muinter}}
\end{center}
\end{figure}

Each intersection point $c$ in $\cS$ corresponds to a set $X_c$ of
intersection points in $\cS'$. Let $N'$ be the number of intersection
points in $\cS'$. Since $\cS'$ is 2-touching, $N'$ is also the number
of edges of the multigraph $H(\cS')$. By construction, each $X_c$ has size exactly
$d(c)-1$, hence $N'=\sum_c |X_c|= \sum_c (d(c)-1)$.  By
Observation~\ref{obs:pla}, the graph $G(\cS')$ and the multigraph $H(\cS')$ are
planar, and since $\cS'$ is $\mu$-intersecting, $H(\cS')$
is a planar multigraph in which each edge has multiplicity at most
$\mu$, therefore it contains $N' \leq
(3n-6)\mu$ edges.  As $d(c)\le k$ for any intersection point $c$ in $\cS$,
we have $$\sum_c d(c)(d(c)-1) \, \le \, kN' \, \le \, (3n-6)\mu k \, <
\,3 \mu k n .$$

Finally, since the number of edges of $H(\cS)$ is precisely $\frac12 \sum_c
d(c)(d(c)-1)$, we have that $H(\cS)$ has less than $\frac32 \mu k n$
edges, as desired.
\end{proof}

In particular, if a $k$-touching set $\cS$ of strings is such
that any two strings intersect in at most one point,
Theorem~\ref{th:chro} yields a bound of $3 k$ for the chromatic
number of $\cS$. We suspect that it is far from tight:

\begin{conjecture}\label{conj:uni}
There is a constant $c>0$, such that every $k$-touching set of
1-intersecting strings can be colored with $k+c$ colors.
\end{conjecture}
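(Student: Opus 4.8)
The plan is to prove Conjecture~\ref{conj:uni} in three stages, mirroring the strategy behind the $(k+5)$-bound for straight-line segments in Section~\ref{sec:contact}: first isolate the right planar structure, then bound the clique number, and finally leverage a Vizing-type recolouring to push the chromatic number down to the clique number plus an absolute constant.

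By Observation~\ref{obs:dra} I would first assume that every string of $\cS$ is a polygonal line and that no endpoint of a string lies on an intersection point, and associate to $\cS$ the plane multigraph $G^\ast$ whose vertices are the intersection points together with the string endpoints, and whose edges are the maximal sub-segments of strings between two consecutive such vertices. Then every string of $\cS$ is a path in $G^\ast$, two strings are adjacent in $G(\cS)$ exactly when their paths share a vertex, and --- crucially for the $1$-intersecting hypothesis --- at every vertex $c$ that is an intersection point the $d(c)\le k$ strings through $c$ pairwise touch \emph{only} at $c$, so they form a clique of $G(\cS)$ whose members share no other intersection point. Combined with planarity of $G^\ast$, the computation in the proof of Lemma~\ref{lem:sbsup} ($\sum_c \binom{d(c)}{2}=|E(G(\cS))|$ and $\sum_c(d(c)-1)\le 3n-6$, with $d(c)\le k$) should be the starting point for a sharper, clique-by-clique counting argument.

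The first genuinely new step is to bound the clique number: $\omega(G(\cS))\le k+c_0$ for an absolute constant $c_0$. This is clearly necessary --- without it the conjecture is false --- and it reduces to a purely combinatorial-geometric statement: one cannot have $k+c_0+1$ pairwise-touching $1$-intersecting strings in which every point lies in at most $k$ of them. Here the clique is edge-clique-covered by its intersection points, each of size at most $k$ and, by the $1$-intersecting property, pairwise edge-disjoint, and the planarity of $G^\ast$ restricted to the induced subdrawing should force this covering to be inefficient, analogously to how a planar graph cannot be too dense; the detailed discharging on that subdrawing is the technical heart of this step.

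Finally, assuming $\omega\le k+c_0$, I would take an edge-minimal counterexample to $\chi\le k+c$ (with $c$ slightly larger than $c_0$), show by a discharging argument on $G^\ast$ that it must contain one of a bounded list of reducible configurations --- a string of small degree, a string almost all of whose neighbours meet it at one sparse intersection point, or two ``parallel'' strings bounding together a short face of $G^\ast$ --- and in each case delete the offending string, colour the rest by minimality, and recolour it by a fan/Kempe-chain argument adapted from edge-colouring, exploiting that at each crowded intersection point the strings through it already occupy $d(c)$ distinct colours. I expect this recolouring, together with the clique bound, to be the main obstacle: in contrast with edge-colouring of planar graphs, a string may be touched from \emph{both} sides at an interior point, so a two-colour Kempe swap need not stay confined to a planar fan, and keeping the additive constant independent of $k$ (rather than letting it grow like $k/3$, as in the one-sided bound $\tfrac{4k}{3}+6$) seems to require either a much more careful global swap or a preliminary reduction showing that interior touching points can be ``split off'' at a cost bounded independently of $k$, bringing the general case back to the contact-system and straight-line settings already controlled in Section~\ref{sec:contact}.
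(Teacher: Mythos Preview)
This statement is an \emph{open conjecture} in the paper; there is no proof to compare against. The paper explicitly leaves Conjecture~\ref{conj:uni} unresolved, proving only the weaker bounds $\chi\le 3k$ for general $1$-intersecting $k$-touching strings (Theorem~\ref{th:chro}), $\chi\le\lceil\tfrac{4k}{3}\rceil+6$ for contact systems (Theorem~\ref{th:contact}), and $\chi\le k+5$ for segments (Theorem~\ref{th:seg}). So your proposal cannot be ``correct'' in the sense of matching the paper's argument --- there is none.

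As a research plan your outline has two genuine gaps, both of which you essentially flag yourself. First, the clique-number bound $\omega(G(\cS))\le k+c_0$ is itself open: the paper exhibits a $3$-touching $1$-intersecting family with $\omega=7=k+4$ and, for general $k$, a family with $\omega=k+2$ (Figure~\ref{fig:sbinf}), but gives no upper bound of the form $k+O(1)$, and your sketch of a ``discharging on the subdrawing'' does not supply one. The edge-disjoint clique cover by intersection points and the planarity of $G^\ast$ together only recover the counting behind Lemma~\ref{lem:sbsup}, which gives $\omega\le 3k$, not $k+O(1)$. Second, even granting the clique bound, your final step is precisely where the paper's methods stall: the degeneracy arguments of Lemmas~\ref{lem:contact} and~\ref{lem:seg} rely on the contact/extendible hypotheses to control peaks and faces of $H(\cS)$, and without them the discharging yields only the $\tfrac{4k}{3}$-type slope you mention. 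Your proposed Kempe-chain recolouring is not a new ingredient the paper overlooked --- it is the missing idea, and your closing paragraph correctly identifies why it may fail (two-sided touching breaks the planar confinement of swaps). In short, the proposal is a reasonable outline of where the difficulty lies, but it does not close either gap.
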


In the next section, we show that this conjecture holds for
$k$-touching (straight-line) segments, a special case of
1-intersecting strings. It is interesting to note that even though
the bound for $k$-touching $\mu$-intersecting graphs in
Conjecture~\ref{conj:multi} and Theorem~\ref{th:fox} does not depend
on $\mu$, the chromatic number of these graphs has some connection
with $\mu$: sets of strings with $\mu=1$ have chromatic number at most
$3 k$, whereas there exists sets of strings with large $\mu$ and
chromatic number at least $\frac92 (k-1)$.

\begin{figure}[htbp]
\begin{center}
\hspace{0.05cm}
\subfigure[\label{fig:sbinf1}]{\includegraphics[scale=0.6]{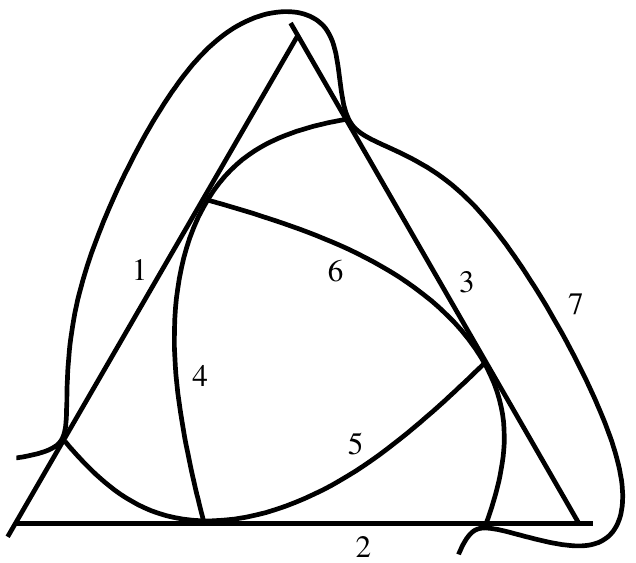}}
\hspace{2cm}
\subfigure[\label{fig:sbinf2}]{\includegraphics[scale=0.6]{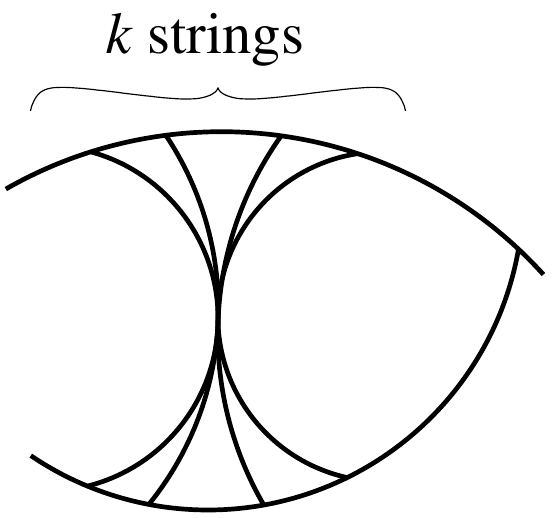}}
\caption{(a) A 3-touching set of 1-intersecting strings requiring 7 colors
  (b) A $k$-touching set of 1-intersecting strings requiring $k+2$
  colors (here $k=4$). \label{fig:sbinf}}
\end{center}
\end{figure}

Note that the constant $c$ in Conjecture~\ref{conj:uni} is at least
4. Figure \ref{fig:sbinf1} depicts a 3-touching set of seven
1-intersecting strings, in which any two strings intersect. Hence,
this set requires seven colors. However this construction does not
extend to $k$-touching sets with $k\ge 4$, it might be that the
constant is smaller for higher $k$.  In Figure \ref{fig:sbinf2}, the
$k$-touching set $\cS_{k}$ contains $k+2$ non-crossing strings, and is
such that any two strings intersect. Hence, $k+2$ colors are required
in any proper coloring.

\section{1-Intersecting contact system of strings}\label{sec:contact}

In this section, all the sets of strings we consider are
1-intersecting (any two strings intersect in at most one point).
An interesting example of 1-intersecting set of strings
is any family of non-crossing (straight-line) segments in the
plane. Such a family is also known as a \emph{contact system of
  segments}, and has been studied in~\cite{FOP91}, where the authors
proved that any bipartite planar graph has a contact representation
with horizontal and vertical segments. 

More generally, a \emph{contact system of strings} is a family of
strings such that the interiors of the strings are pairwise
non-intersecting. In other words, if $c$ is a contact point in the
interior of a string $s$, all the strings containing $c$ and distinct from
$s$ end at $c$. A contact point $p$ is a {\it peak} if every string
containing $p$ has an end at $p$. Otherwise, that is if $p$ is an
interior point of a string $s$ and an end for all the other strings
containing $p$, $p$ is {\it flat}. A flat contact point $p$ is {\it
  one-sided} if all the strings ending at $p$ are on the same side of
the unique string whose interior contains $p$. A contact system of
strings in which every flat contact point is one-sided is also said to
be one-sided.

It was proved by Hlin\v en\'y~\cite{Hli98a} that the intersection graph
of any one-sided 2- or 3-touching set of segments is planar. Note that
as a 2-touching set of segments is always one-sided, it is also
4-colorable.  In~\cite{Oss99}, Ossona de Mendez proved that
it is NP-complete to determine whether a 2-touching set of segments is
3-colorable.

In~\cite{Hli98}, Hlin\v en\'y studied the clique and chromatic numbers
of one-sided $k$-touching contact systems of strings. He proved that
the maximal clique in this class is $K_{k+1}$ and that the graphs in
this class are $2k$-colorable. He also asked the following: is there a
constant $c$ such that if a contact system of strings is $k$-touching,
1-intersecting, and one-sided, then it is $(k+c)$-colorable? Note that
Conjecture~\ref{conj:uni} would imply a positive answer to this
question.

\medskip

In the first part of this section, we prove that 1-intersecting and
$k$-touching contact systems of strings are $\left(\lceil\tfrac43
k\rceil+6\right)$-colorable. In the second part, we show that any
$k$-touching contact system of segments is $(k+5)$-colorable. Note
that we do not assume the contact systems to be one-sided (but we also
show that adding this assumption slightly improves the additive
constants in our results).

\begin{theorem}\label{th:contact}
For any $k\ge 3$, any 1-intersecting $k$-touching contact system of strings can be
colored with $\lceil \tfrac43 k\rceil+6$ colors.
\end{theorem}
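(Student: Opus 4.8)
The plan is to bound the number of edges in the intersection graph $G$ of a $1$-intersecting $k$-touching contact system of strings $\cS$ and then apply a degeneracy/discharging argument. As in the proofs of Lemmas~\ref{lem:k3} and~\ref{lem:sbsup}, the natural first move is to use Observation~\ref{obs:dra} to assume every string is a polygonal line with no endpoint lying on an intersection point, and then to perform a local ``untangling'' operation at each contact point $c$ of multiplicity $d(c)$. Because the system is a \emph{contact} system, at a contact point $c$ in the interior of a string $s$, all other strings through $c$ end at $c$; so unlike the general case we have extra control: the strings meeting $c$ split into the ``through'' string $s$ (at most one, or zero if $c$ is a peak) and the ones ending at $c$, all lying in the two half-neighborhoods of $s$ at $c$. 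I would replace $\cS$ locally by a $2$-touching contact system $\cS'$, recording for each old contact point $c$ the set $X_c$ of new contact points it becomes; by planarity of $\cS'$ (Observation~\ref{obs:pla}) and the fact that a simple planar graph on $n$ vertices has at most $3n-6$ edges, one gets $\sum_c(d(c)-1)=|\cS'\text{-contacts}|\le 3n-6$, hence $\sum_c d(c)(d(c)-1)\le (k-1)\cdot(?)$ — but this crude bound only reproduces something like $\tfrac32(k-1)n$, not the desired linear-in-$k$-with-coefficient-$\tfrac23$ bound. So the real work is to exploit the contact (and, if needed, structural) constraints more carefully to show $m\le \tfrac23 kn + O(n)$, which by degeneracy gives chromatic number at most $\tfrac43 k+O(1)$; the constant $6$ is then tuned from the additive $O(n)$ term.

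The key idea I would pursue to get the coefficient $\tfrac23$ rather than $\tfrac32$ is a two-graph splitting in the spirit of the $G_0/G_1$ decomposition of Lemma~\ref{lem:k3}, but adapted to contact points. For each edge $ac$ of $G$ there is a unique contact point $p=p(a,c)$ where $a$ and $c$ meet; classify the pair according to whether, at $p$, the string $a$ (say) is an ``interior'' string or an ``ending'' string, and whether some third string is sandwiched. Each contact point of multiplicity $d$ contributes $\binom{d}{2}$ edges but ``occupies'' roughly $d-1$ units of planar budget, and the worst ratio $\binom{d}{2}/(d-1)=d/2\le k/2$ is attained only at a \emph{peak} of multiplicity exactly $k$; at a flat point the interior string is shared, which should let one charge its incident edges to neighboring faces and shave the ratio down toward $\tfrac23 k$. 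Concretely I would set up a discharging argument on the planar graph $\cS'$: give each new contact vertex an initial charge equal to the number of $G$-edges it is responsible for, each face of the plane subdivision a charge based on its length, use Euler's formula $v-e+f=2$, and redistribute so that every string (vertex of $G$) ends up charged at most $\tfrac23 k+c'$; summing gives $2m=\sum_{\text{strings}}\deg_G\le(\tfrac23 k+c')n$, i.e. $m\le(\tfrac13 k+\tfrac{c'}2)n$ and hence $G$ is $(\tfrac23 k+c')$-degenerate. I expect the numbers to come out so that one-sidedness is \emph{not} needed for the main bound but tightens $c'$, matching the remark after the theorem statement.

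The main obstacle, as I see it, is controlling configurations of several flat contact points sharing interior strings: a single string $s$ may be the interior string at many contact points, and the neighbors accumulated along $s$ form a sequence whose adjacencies among themselves must be bounded. This is exactly where $1$-intersecting is essential (two strings meet at most once, so two neighbors of $s$ on the same side contribute at most one edge between them) and where a careful local planar-graph estimate — essentially re-proving a Shannon/Vizing-type bound for the auxiliary ``conflict'' graph along $s$ — is required. I would handle this by, for each string $s$, building the planar graph on $\{s\}\cup N(s)$ obtained by keeping only contact points involving $s$, observing it is a \emph{near-outerplanar} or low-genus-zero structure because all of $N(s)$ attaches to the curve $s$, and bounding its edge count by something like $2(|N(s)|)-c$; combined with the global planarity of $\cS'$ this should close the estimate. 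Once the edge bound $m\le(\tfrac13 k+\tfrac52)n$ or so is established, $G$ is $(\lceil\tfrac43 k\rceil+5)$-degenerate, hence $(\lceil\tfrac43 k\rceil+6)$-colorable greedily, giving the theorem.
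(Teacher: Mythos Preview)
Your overall target is right --- you want $m\le(\tfrac23 k+O(1))n$ so that degeneracy gives $\chi\le\tfrac43 k+O(1)$ --- but the route you sketch does not get there, and the gap is a specific missing ingredient rather than just vagueness.

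The planarity you invoke is that of the untangled $2$-touching system $\cS'$ on $n$ string-vertices, yielding $\sum_c(d(c)-1)\le 3n-6$. As you observe yourself, this alone gives only $m\le\tfrac32 kn$. None of the mechanisms you propose to close the gap actually do so: the $G_0/G_1$ split of Lemma~\ref{lem:k3} was tailored to $k=3$ and to 3-touching points specifically, and the local graph on $\{s\}\cup N(s)$ is not near-outerplanar in any useful sense --- a string $s$ carrying $t$ flat $k$-contact points already has $t\binom{k-1}{2}$ edges among its $t(k-1)$ neighbours, giving ratio about $k/2$, not $2$. Charging flat points to faces can shave additive constants but cannot change the leading coefficient from $\tfrac32$ to $\tfrac23$.

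What you are missing is a second, independent linear constraint: the \emph{endpoint count}. Every string has exactly two ends, and in a contact system a contact point of multiplicity $i$ absorbs $i$ ends if it is a peak and $i-1$ if it is flat; hence
\[
2n=\sum_{i\ge 2} i\,p_i+\sum_{i\ge 2}(i-1)\,f_i,
\]
where $p_i,f_i$ count peaks and flat points of multiplicity $i$. The paper then works with the plane graph $H(\cS)$ whose \emph{vertices are the contact points} (not the strings), applies Euler there to obtain
\[
\sum_{i}(i-6)p_i+\sum_i(i-5)f_i\le -12-2f_2,
\]
and treats the maximisation of $m=\sum_i\binom{i}{2}(p_i+f_i)$ subject to these two constraints as a linear program. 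A short sequence of exchange arguments shows the optimum concentrates on $p_2,f_3,p_k,f_k$, and the resulting system gives $m<(\tfrac23 k+3)n$. The endpoint equation is what forces the number of multiplicity-$k$ points down to $O(n/k)$; without it your planarity bound cannot be leveraged further.

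Finally, a numerical slip: in your last line, $m\le(\tfrac13 k+\tfrac52)n$ would give average degree $\le\tfrac23 k+5$, not $\tfrac43 k+5$; the edge bound you need (and the one the paper proves) is $m<(\tfrac23 k+3)n$.
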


As in Theorem~\ref{th:fox}, the result is a consequence of a bound on
the degeneracy of these graphs:

\begin{lemma}\label{lem:contact}
For any $k\ge 3$, if $\cS$ is a 1-intersecting $k$-touching contact system of
strings, then $G(\cS)$ contains a vertex of degree at most $\lceil \tfrac43 k\rceil+5$.
\end{lemma}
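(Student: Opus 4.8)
The plan is to reduce the statement to an upper bound on the number of edges: if $\cS$ is a $1$-intersecting $k$-touching contact system of $n$ strings, then $G(\cS)$ has at most about $\tfrac23 kn$ edges (up to a linear error term), and then handshaking produces a string of degree at most $\lceil\tfrac43 k\rceil+5$, which is exactly what the lemma asks for; so it suffices to contradict the assumption that every string has degree at least $\lceil\tfrac43 k\rceil+6$. By Observation~\ref{obs:dra} I may assume the strings are polygonal with no endpoint lying on an intersection point. For a contact point $c$, write $d(c)\in[2,k]$ for its multiplicity, and call $c$ \emph{flat} if one string passes through it (and $d(c)-1$ others end there) and a \emph{peak} if all $d(c)$ strings end there; these are the only possibilities since the interiors of strings are pairwise disjoint. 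Because the system is $1$-intersecting, two strings are adjacent in $G(\cS)$ if and only if they share a contact point, they share exactly one, and at each $c$ all $\binom{d(c)}{2}$ pairs of strings through $c$ are edges; hence $|E(G(\cS))|=\sum_c\binom{d(c)}{2}$ and $\deg_{G(\cS)}(s)=\sum_{c\ni s}(d(c)-1)$.

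I would then use two structural ingredients. First, the plane graph $H$ whose vertices are the contact points together with the free endpoints of strings, and whose edges are the maximal sub-arcs of strings between two consecutive such points: injectivity of the strings rules out loops, and a multiple edge would force two strings to share two contact points (contradicting $1$-intersecting), so $H$ is a \emph{simple} plane graph; moreover $\deg_H(c)=d(c)+1$ if $c$ is flat and $d(c)$ if $c$ is a peak, and the free endpoints have degree $1$. Second, a planar subgraph $G_0\subseteq G(\cS)$: at each contact point $c$ keep a set of $\min\{\binom{d(c)}{2},\,2d(c)-3\}$ of the touching pairs at $c$ that triangulate the convex polygon on the $d(c)$ strings taken in their cyclic order around $c$. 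One checks $G_0$ is planar by blowing up every contact point to a small pairwise-disjoint disk, drawing the triangulated polygon inside the disk on the points where the strings meet its boundary, and routing the incidences along the strings (which, being a contact system, do not cross) so that no two of these curves cross. Since $G_0$ is simple and planar, $|E(G_0)|\le 3n-6$, and because $\binom{d}{2}-\min\{\binom{d}{2},2d-3\}=\binom{d-2}{2}$ this gives
$$|E(G(\cS))|\ \le\ 3n-6+\sum_c\binom{d(c)-2}{2}.$$

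It then remains to bound $\sum_c\binom{d(c)-2}{2}$. The available constraints are $d(c)\le k$; the endpoint count $\sum_c(d(c)-1)\le\sum_c a(c)\le 2n$, where $a(c)\ge d(c)-1$ is the number of strings ending at $c$ and each of the $2n$ string endpoints is counted at most once; and Euler's formula applied to $H$, exploited through a discharging argument (a contact point of large multiplicity has large degree in $H$, so such points are rare, and distinguishing peaks from flat points controls how they can be arranged around the faces of $H$). Balancing these contributions is where the coefficient $\tfrac43$ comes from, and this is the main obstacle: using only planarity of $G_0$ together with the endpoint count, the linear program governing the $d(c)$'s is optimized by contact points of multiplicity exactly $k$ — the common extreme point of both constraints — and yields only a bound of order $\tfrac32 k$ on the minimum degree. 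Getting down to $\tfrac43 k$ requires the finer case analysis on $H$ that prevents many multiplicity-$k$ contact points from coexisting in a planar drawing, together with careful bookkeeping of the linear error terms (which is also where the hypothesis $k\ge 3$ and the additive constant $6$, resp. $5$, enter).
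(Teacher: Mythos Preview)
Your proposal is not a proof: you explicitly stop at what you yourself call ``the main obstacle'' and never bound $\sum_c\binom{d(c)-2}{2}$, nor carry out the ``finer case analysis on $H$'' you allude to. What remains is the entire content of the lemma, not bookkeeping.

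The paper does not use your $G_0$ construction. It works directly with the plane graph $H(\cS)$ whose vertices are the contact points (in a minimal counterexample every string endpoint is a contact point, so there are no degree-one vertices) and whose edges are the sub-arcs of strings between consecutive contact points. Writing $p_i,f_i$ for the number of peaks and flat contact points of multiplicity $i$, a contact point has degree $i$ or $i+1$ in $H(\cS)$ respectively, so Euler's formula gives $\sum_i ip_i+\sum_i(i+1)f_i\le 2(3c-6)$. The decisive refinement you are missing is that every \emph{one-sided} flat contact point lies on a face of $H(\cS)$ of length at least four (this is where 1-intersecting is used), so one extra edge can be added inside that face while keeping $H(\cS)$ planar; since every flat $2$-contact point is automatically one-sided, the edge bound tightens to $3c-6-f_2$, yielding
\[
\sum_i(i-6)p_i+\sum_i(i-5)f_i\ \le\ -12-2f_2.
\]
This inequality, together with the endpoint equality $2n=\sum_i ip_i+\sum_i(i-1)f_i$ and $m=\sum_i\binom{i}{2}(p_i+f_i)$, is treated as a linear program in the nonnegative reals $p_i,f_i$ with $n$ fixed. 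Three short perturbation arguments show that any optimum has only $p_2,f_3,p_k,f_k$ nonzero (the $-2f_2$ term is exactly what rules out $f_2$ and makes $f_3$ the smallest surviving flat variable), after which elementary manipulations give $(3k-8)(p_k+f_k)<4n$ and then $m<n(\tfrac23 k+3)$, contradicting the assumed minimum degree $\lceil\tfrac43 k\rceil+6$.

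As a side remark, your diagnosis of your own constraints is off: a configuration with all $d(c)=k$ cannot be optimal, because it already violates the planarity of $G_0$ --- one would have $\sum_c(2d(c)-3)\approx 4n>3n$. So the obstacle is not that your LP is optimized at $d(c)\equiv k$; it is that you never set up and solve the right LP at all.
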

\begin{proof}
Assume that there is a counterexample, i.e. a 1-intersecting
$k$-touching contact system $\cS$
of $n$ strings such that $G(\cS)$ has minimum degree at least $\lceil
\tfrac43 k\rceil+6$. In particular, $G(\cS)$ has $m \ge n(\tfrac23
k+3)$ edges. We take a counterexample for which $n$ is minimal, and
with respect to this, $m$ is maximal.
Observe that $G(\cS)$ is connected, since otherwise by minimality
of $n$, some connected component would have a vertex of degree at most $\lceil \tfrac43 k\rceil+5$, a contradiction.
Observe also that if some string of $\cS$ has at most one contact
point, then the corresponding vertex of $G(\cS)$ has degree at most
$k-1\le \lceil \tfrac43 k\rceil+5$, a contradiction. This implies that every string of $\cS$ has at least two contact
points. As a consequence, we can also assume that the two
ends of each string of $\cS$ are contact points (if not, delete the
portion of a string between a free end and its first contact point).

\smallskip

Let $H({\cS})$ be the plane graph whose vertices are the contact
points of $\cS$, whose edges link two contact points if and only if they are
consecutive on a string of $\cS$, and whose faces are the connected
regions of $\bbR^2\setminus \cS$.

Let $p_i$ and $f_i$ be the number of contact points of exactly $i$
strings of $\cS$ that are respectively peaks and flat. Let us denote by $c$ the total number of contact points, and note that $c=\sum_{i=2}^{k}(p_i+f_i)$.
By counting the number of ends of a string of $\cS$ in two different
ways, we obtain that:

\begin{equation}
\label{eq1}
2n = \sum_{i=2}^{k} i p_i + \sum_{i=2}^{k} (i-1)f_i
\end{equation}

Consider a one-sided flat contact point $p$ and let $s$ be the unique
string such that $p$ is an interior point of $s$. If we draw a small
open disk $D$ containing $p$, a unique face $f$ of $H(\cS)$ has the
property that $f \cap D$ is incident to $s$, and to no other string
containing $p$. We denote this face $f$ of $H(\cS)$ by $F(p)$. Remark
that since $\cS$ is 1-intersecting, any face $f$ of $H(\cS)$ contains at least
$|F^{-1}(f)|+3$ vertices, thus at least $|F^{-1}(f)|$ edges can be
added to $H(\cS)$ (inside $f$) with the property that $H(\cS)$ remains
planar. Hence in total, one can add as many edges to $H(\cS)$ as the number of
one-sided contact points, while keeping $H(\cS)$ planar. Since every flat
2-contact point is one-sided, and every planar graph on $c$ vertices has at
most $3c-6$ edges, it follows that $H(\cS)$ has at most $3c-6-f_2$
edges. As a consequence, the sum of the degrees of the vertices of
$H(\cS)$ is:

$$\sum_{i=2}^{k} i p_i + \sum_{i=2}^{k} (i+1)f_i \leq 2\cdot (3c - 6 - f_2).$$

By the definition of $c$, this is equivalent to:

\begin{equation}
\label{eq2}
\sum_{i=2}^{k} (i-6) p_i + \sum_{i=2}^{k} (i-5)f_i \leq  - 12 -2 f_2
\end{equation}
Since any pair of strings in $\cS$ intersects at most once, the number
of edges in $G(\cS)$ satisfies the following equation.
\begin{equation}
\label{eq3}
m = \sum_{i=2}^{k} {i \choose 2}(p_i + f_i)
\end{equation}

Our goal is to use (\ref{eq1}), (\ref{eq2}), and (\ref{eq3}) to
prove that $m$ is less than $n(\tfrac23
k+3)$, which is a contradiction. To prove this, we will see that in
order to maximize $m$, we have to set all the values of $f_i$ and
$p_i$ to zero, except for $p_2,f_3,p_k,f_k$ (in other words, the
weight has to be concentrated on the extremal variables). Once this is
proved, bounding $m$
will be significantly simpler.

Let us consider the linear program (LP1) defined on variables $p_i$
and $f_i$ with values in $\mathbb{R}^+$ such that the
equation~(\ref{eq1}) and the inequality (\ref{eq2}) are satisfied, and
such that the value $m$ defined by (\ref{eq3}) is maximized. Here $n$
is considered to be a constant (it is not a variable of the linear program). Note that the
solution $m^*$ of this problem is clearly an upper bound of the number
of edges of $G(\cS)$.

\begin{claim}\label{claim8}
The optimal solutions of (LP1) are such that $f_2 = 0$.
\end{claim}
If $f_2 \neq 0$, take a small $\epsilon>0$ and replace $f_2$ by
$f_2-\epsilon$ and $f_3$ by $f_3+\epsilon/2$. Then (\ref{eq1}) remains
valid, inequality (\ref{eq2})
still holds (both sides are increased by $2\epsilon$), while (\ref{eq3}) is increased by $\epsilon/2$. This concludes the proof of the claim.

\begin{claim}\label{claim9}
The optimal solutions of (LP1) are such that $f_i = 0$,
for every $4\leq i\leq k-1$.
\end{claim}
If for some $4\leq i\leq k-1$, $f_i\ne 0$, choose a small $\epsilon>0$
and replace (i) $f_{3}$ by $f_{3}+\epsilon\,\frac{(k-i)}{k-3}$; (ii)
$f_{i}$ by $f_{i}-\epsilon$; and (iii) $f_{k}$ by
$f_{k}+\epsilon\,\frac{i-3}{k-3}$. Then (\ref{eq1}) remains valid,
inequality (\ref{eq2}) still holds (the left-hand side and the
right-hand side remain unchanged), while the value of $m$ in
(\ref{eq3}) is increased by $\frac{\epsilon}{k-3}(3(k-i)-{i
  \choose 2}(k-3)+{k \choose 2}(i-3))$. The function $g: i\mapsto 3(k-i)-{i
  \choose 2}(k-3)+{k \choose 2}(i-3)$ is a (concave) parabola with
$g(4)={k \choose 2}-3k+6>0$ (recall that $4\leq i\leq k-1$, so $k\ge 5$) and $g(k)=0$, so it is positive in the interval $[4,k-1]$. This concludes the proof of the claim.

\begin{claim}\label{claim7}
The optimal solutions of (LP1) are such that $p_i = 0$, for every
$3\leq i \leq k-1$.
\end{claim}
If for some $3\leq i\leq k-1$, $p_i\ne 0$, choose a small $\epsilon>0$
and replace (i) $p_{2}$ by $p_2+\epsilon\,\frac{(k-i)}{k-2}$; (ii)
$p_{i}$ by $p_{i}-\epsilon$; and (iii) $p_{k}$ by
$p_{k}+\epsilon\,\frac{i-2}{k-2}$. Then (\ref{eq1}) remains valid,
inequality (\ref{eq2}) still holds (the left-hand side and the
right-hand side remain unchanged), while the value of $m$ in
(\ref{eq3}) is increased by $\frac{\epsilon}{k-2}((k-i)-{i
  \choose 2}(k-2)+{k \choose 2}(i-2))$. The function $g: i\mapsto (k-i)-{i
  \choose 2}(k-2)+{k \choose 2}(i-2)$ is a (concave) parabola with
$g(3)={k \choose 2}-2k+3>0$ (recall that $3\leq i\leq k-1$, so $k\ge
4$) and $g(k)=0$, so it is positive in the interval $[3,k-1]$. This concludes the proof of the claim.

\bigskip

It follows from the previous claims that $c=p_2+f_3+p_k+f_k$, so
(\ref{eq2}) gives $-4p_2+(k-6)p_k-2f_3+(k-5)f_k<0$. Therefore,
$$(k-6)(p_k+f_k)<4p_2+2f_3.$$ By equation (\ref{eq1}) and the previous claims, we have $2n=2p_2+k p_k
+2f_3+(k-1)f_k$. Hence, $$2p_2+f_3 \le
2p_2+2f_3=2n-k p_k-(k-1)f_k \le 2n-(k-1)(p_k+f_k),$$ which implies
$(k-6)(p_k+f_k)<4n-2(k-1)(p_k+f_k)$. This can be rewritten
as $$(3k-8)(p_k+f_k)<4n.$$ Now, by equation (\ref{eq3}),
$$
\begin{array}{rcl}
m=p_2+3f_3+{k \choose 2}(p_k+f_k) & \le & \frac32(2n-(k-1)(p_k+f_k))+{k
    \choose 2}(p_k+f_k) \\
  & \le & 3n+(p_k+f_k)(-\frac32(k-1)+{k \choose 2})\\
  & < &  3n+\frac{4n}{3k-8}(3k-8)\frac{k}6 \mbox{\hspace{10pt} (since }k\ge 3\mbox{)}\\
  & < & n\,(\tfrac23 k+3).
\end{array}$$

We obtain that the graph $G(\cS)$ has less than $n(\tfrac23 k+3)$
edges, which is a contradiction.
\end{proof}

If the contact system we consider is one-sided, the argument we used
for flat 2-intersection points while establishing (\ref{eq2}) in the
previous proof works for all flat points, and it follows that $H(\cS)$
has at most $3c-6-\sum_{i=2}^k f_i$ edges. Consequently, inequality (\ref{eq2})
can be replaced by the following stronger inequality:
\begin{equation}
\label{eq4}
\sum_{i=2}^{k} (i-6) p_i + \sum_{i=2}^{k} (i-3)f_i \leq - 12
\end{equation}
Let (LP2) be the new linear program. It is not difficult to check that
Claims~\ref{claim8} and~\ref{claim9} remain satisfied. Moreover, it
can be proved that in some optimal
solution of (LP2), we also have $f_3=0$. Similar computations give
that in this case the graph $G(\cS)$ contains a vertex of degree at most $\lceil \tfrac43 k\rceil+1$. As a consequence:

\begin{theorem}\label{th:contactos}
For any $k\ge 3$, any 1-intersecting one-sided $k$-touching contact system of strings can be
colored with $\lceil \tfrac43 k\rceil+2$ colors.
\end{theorem}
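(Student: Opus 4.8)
The plan is to deduce the theorem from the following degeneracy statement, in exactly the way Theorem~\ref{th:contact} is deduced from Lemma~\ref{lem:contact}: \emph{for every $k\ge 3$, if $\cS$ is a $1$-intersecting one-sided $k$-touching contact system of strings, then $G(\cS)$ has a vertex of degree at most $\lceil\tfrac43 k\rceil+1$.} This suffices, because deleting a string from $\cS$ preserves being $1$-intersecting, one-sided and $k$-touching, so every induced subgraph of $G(\cS)$ also has a vertex of degree at most $\lceil\tfrac43 k\rceil+1$; hence $G(\cS)$ is $\bigl(\lceil\tfrac43 k\rceil+1\bigr)$-degenerate and therefore $\bigl(\lceil\tfrac43 k\rceil+2\bigr)$-colorable by a greedy argument.

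For the degeneracy statement I would follow the proof of Lemma~\ref{lem:contact} almost verbatim. Take a counterexample $\cS$ on $n$ strings with $n$ minimal and, among these, with $m=|E(G(\cS))|$ maximal. As there, $G(\cS)$ is connected, every string has at least two contact points, and, after trimming free ends, both ends of every string are contact points; form the plane graph $H(\cS)$ together with the counts $p_i,f_i$ and $c$, so that identities (\ref{eq1}) and (\ref{eq3}) hold unchanged. The one new ingredient, already announced just before the statement, is that since $\cS$ is one-sided, \emph{every} flat contact point (not only the flat $2$-contact ones) is one-sided, so for each flat point $p$ the face $F(p)$ of $H(\cS)$ can receive one extra edge while keeping $H(\cS)$ planar; here $1$-intersectingness guarantees that each face $f$ of $H(\cS)$ carries at least $|F^{-1}(f)|+3$ vertices on its boundary. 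Consequently $H(\cS)$ has at most $3c-6-\sum_{i=2}^k f_i$ edges, and summing the degrees of $H(\cS)$ gives the strengthened inequality (\ref{eq4}), namely $\sum_{i=2}^k (i-6)p_i+\sum_{i=2}^k (i-3)f_i\le -12$.

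Rather than rerunning perturbation arguments on the linear program (LP2), I would finish by exhibiting an explicit dual certificate, which does the optimization uniformly in $k$. Set $\alpha=\tfrac k3+\tfrac12$ and $\beta=\tfrac k6$, and check that
\[
\binom{i}{2}\le \alpha\, i+\beta(i-6)\qquad\text{and}\qquad \binom{i}{2}\le \alpha(i-1)+\beta(i-3)\qquad\text{for all }2\le i\le k.
\]
In both lines the left-hand side is a convex function of $i$ and the right-hand side is affine, so it is enough to check $i=2$ and $i=k$: the first inequality then holds with equality at both endpoints, and the second reduces at both endpoints to $k\ge 3$. Multiplying the first inequality by $p_i\ge 0$, the second by $f_i\ge 0$, summing over $2\le i\le k$, and using (\ref{eq1}), (\ref{eq3}) and (\ref{eq4}) together with $\beta\ge 0$, we obtain
\[
m=\sum_{i=2}^k\binom{i}{2}(p_i+f_i)\le \alpha\cdot 2n+\beta\cdot(-12)=n\Bigl(\tfrac23 k+1\Bigr)-2k<n\Bigl(\tfrac23 k+1\Bigr).
\]
On the other hand a counterexample has $2m=\sum_v\deg_{G(\cS)}(v)\ge n\bigl(\lceil\tfrac43 k\rceil+2\bigr)\ge n\bigl(\tfrac43 k+2\bigr)$, i.e.\ $m\ge n(\tfrac23 k+1)$ — a contradiction. (This certificate is just the optimal dual solution of (LP2); equivalently one can carry out the perturbation arguments, the only change from Lemma~\ref{lem:contact} being that the move proving $f_2=0$ must also push mass onto $p_2$, e.g.\ $f_2\mapsto f_2-\epsilon$, $f_3\mapsto f_3+\epsilon/4$, $p_2\mapsto p_2+\epsilon/4$, since $f_2$ no longer appears on the right-hand side of the key inequality, and then one additionally checks $f_3=0$ in an optimum.)

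The step I would be most careful with is the derivation of (\ref{eq4}): the assertion that one-sidedness lets every flat point contribute one extra planar edge inside its face $F(p)$, which genuinely relies on the $1$-intersecting hypothesis so that the faces of $H(\cS)$ are large enough. This is the only place where the argument differs in substance from the proof of Lemma~\ref{lem:contact}; the minimal-counterexample reductions, the identities (\ref{eq1})--(\ref{eq3}), the optimization (whether through the dual certificate above or through perturbations), and the passage from degeneracy to a proper coloring are all routine adaptations. If one prefers the perturbation route, a little extra bookkeeping is needed for $k\in\{3,4,5\}$, where the constraint $p_2\ge 0$ may become active; the dual-certificate computation above already covers those cases with no change.
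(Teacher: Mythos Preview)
Your proof is correct and follows the paper's approach: the same degeneracy lemma, the same minimal-counterexample setup, and the same strengthened inequality~(\ref{eq4}) obtained by applying the one-sided face argument to \emph{all} flat points. The only difference is that where the paper sketches perturbation claims on (LP2) (verifying that Claims~\ref{claim8}--\ref{claim9} still hold and then additionally forcing $f_3=0$ before doing the final computation), you bypass the LP reductions entirely with the explicit dual certificate $(\alpha,\beta)=(\tfrac{k}{3}+\tfrac12,\tfrac{k}{6})$, which is a clean and uniform way to extract the bound $m<n(\tfrac23 k+1)$ directly from (\ref{eq1}), (\ref{eq3}) and (\ref{eq4}).
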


We now show how to modify the proof of Theorem~\ref{th:contact} to
prove that $k$-touching contact systems of segments are
$(k+5)$-colorable. For technical reasons, we will consider instead the
case of \emph{extendible} contact system of strings,
which we define next.

\medskip

A \emph{pseudo-line} is the homeomorphic image of a straight
line in the plane. An \emph{arrangement of pseudo-lines} is a set of
pseudo-lines such that any two of them intersect at most once, and when they do they cross each other. We say that a contact system of strings
is \emph{extendible} if there is an arrangement of pseudo-lines, such
that each string $s$ of the contact system is contained in a distinct
pseudo-line of the arrangement (this pseudo-line is called \emph{the
  support} of $s$). Observe that a contact system of
segments is clearly extendible. On the other hand, it was proved by de
Fraysseix and Ossona de Mendez ~\cite{dFOdM07} that any extendible
contact system of strings can be ``stretched'', i.e. continuously
changed to a contact system of segments, while keeping the same
underlying intersection graph.

We start with two observations about extendible contact systems of
strings (the first one is similar to the first part of Observation~\ref{obs:dra}).

\begin{observation}\label{obs:drapl}
Let $\cS$ be an extendible contact system of
strings, and let $\cL$ be a corresponding arrangement of
pseudo-lines. We can assume without loss of generality that each
string of $\cS$ is the union of finitely many segments,
and each pseudo-line of $\cL$ is the union of two rays (affine images
of $\{0\}\times[0,+\infty)$ in $\mathbb{R}^2$) and finitely many segments.
\end{observation}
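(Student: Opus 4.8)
Our plan is to follow the proof of Observation~\ref{obs:dra}, the extra work being to polygonalise the pseudo-lines of $\cL$ and to make each of them straight near infinity. First discard from $\cL$ every pseudo-line containing no string of $\cS$, so that $\cL$ has exactly $n:=|\cS|$ pseudo-lines, one supporting each string; this changes no combinatorial datum. The arrangement $\cL$ has finitely many crossing points, $\bigcup\cS$ is bounded, and each pseudo-line, being the homeomorphic image of a line, has exactly two ends. Hence we may fix a closed disk $B$ whose interior contains all of $\bigcup\cS$, all contact points of $\cS$, and all crossing points of $\cL$, and such that outside $B$ the arrangement consists of $2n$ pairwise disjoint unbounded arcs, two per pseudo-line, each meeting the bounding circle $\partial B$ in a single point. (For the last property one may first apply a homeomorphism of the plane equal to the identity on a large sub-disk of $B$; such a homeomorphism fixes every contact point, crossing point, string and support.)

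Now let $H$ be the plane graph whose vertices are the crossing points of $\cL$ lying in $B$, the contact points of $\cS$, the free endpoints of the strings of $\cS$, and the $2n$ points where pseudo-lines meet $\partial B$; its edges are the sub-arcs of pseudo-lines joining two consecutive such vertices inside $B$, together with the $2n$ arcs into which $\partial B$ is divided by the boundary points. Each string of $\cS$, and each pseudo-line intersected with $B$, is then a path of $H$. Subdividing every edge of $H$ once, exactly as in Observation~\ref{obs:dra}, we obtain a finite simple plane graph whose outer face is bounded by a cycle $C$ (a subdivision of $\partial B$). By a classical strengthening of F\'ary's theorem~\cite{Far48} — a plane graph whose outer face is bounded by a cycle admits a plane straight-line drawing realising that cycle as any prescribed convex polygon, which is valid e.g.\ after one triangulates the inner faces of $H$ — we redraw $H$ with straight edges so that $C$ is a convex polygon $P$; the whole drawing then lies inside $P$.

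It remains to reattach the unbounded parts. At each of the $2n$ corners $v$ of $P$ that is a boundary point, draw the ray starting at $v$ and pointing away from the centroid $O$ of $P$. Each such ray is contained in the ray from $O$ through the corresponding corner, and a ray emanating from an interior point of a convex polygon passes through at most one of its corners, so the $2n$ rays are pairwise disjoint; moreover each of them meets the drawing of $H$ only at its initial vertex, since that drawing lies in $P$. For every pseudo-line $\ell$, concatenate the polygonal path drawing $\ell\cap B$ with the two rays issued from its two boundary vertices; then $\ell$ becomes a union of two rays and finitely many segments, and any string $s$, being a sub-path of its support, becomes a union of finitely many segments. Since the new drawing is combinatorially equivalent to the original one and the added rays create no new incidences, the $n$ curves obtained still form an arrangement of pseudo-lines supporting the (unchanged) contact system $\cS$, with the same intersection graph; one then discards the scaffolding polygon $P$. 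As at the end of the proof of Observation~\ref{obs:dra}, a last harmless perturbation (slightly prolonging a string) ensures in addition that no endpoint of a string coincides with a contact point.

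The one genuinely delicate point is the behaviour at infinity, i.e.\ knowing that outside a large disk the arrangement really does look like $2n$ disjoint unbounded arcs meeting $\partial B$ once each; this is precisely where the hypothesis that a pseudo-line is the homeomorphic image of a line is used. Passing to the one-point compactification $S^2$ (to which every self-homeomorphism of the plane extends while fixing the point at infinity), each pseudo-line together with $\infty$ is a Jordan curve through $\infty$ and hence has exactly two ends; everything else is routine manipulation of finite plane graphs together with the prescribed-convex-outer-face form of F\'ary's theorem.
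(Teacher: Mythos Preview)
Your proof is correct and follows the same core strategy as the paper's: encode the relevant incidences as a finite plane graph and invoke F\'ary's theorem. The paper is terser---it takes as vertices the endpoints of the strings and the intersection points of $\cS\cup\cL$, applies F\'ary directly, and then simply asserts that one ``can then replace each end of a pseudo-line by finitely many segments and a ray, without changing $G(\cS)$.'' You are considerably more explicit about this last step: by first clipping to a large disk $B$, including $\partial B$ in the auxiliary graph, and invoking the prescribed-convex-outer-face strengthening of F\'ary's theorem, you arrange for the straight-line drawing to sit inside a convex polygon, from whose corners you can shoot pairwise disjoint rays outward from the centroid. This buys you an actual verification that the extended curves still form a pseudo-line arrangement (no new crossings are created), a point the paper leaves implicit. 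One small slip: in your parenthetical you say the auxiliary homeomorphism ``fixes every \ldots\ support,'' but of course the supports are unbounded and cannot be fixed pointwise; what you mean (and what suffices) is that it fixes each string and each crossing point, and carries each pseudo-line to a pseudo-line with the same crossing pattern.
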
 

The proof is similar to that of Observation~\ref{obs:dra}.  We consider the plane graph $G$ whose vertices are the endpoints of
the strings of $\cS$ and the intersection points of $\cS \cup \cL$, and whose edges connect two points if
they are consecutive in some string of $\cS$ or some pseudo-line of
$\cL$. Since any two pseudo-lines intersect at most once, the resulting graph $H$ is a simple planar (finite) graph, and
by F\'ary's theorem~\cite{Far48} it has an
equivalent drawing in which all the edges are (straight-line)
segments. We can then replace each end of a pseudo-line by finitely
many segments and a ray,
without changing $G(\cS)$, and
Observation~\ref{obs:drapl} follows.

\begin{observation}\label{obs:drapl2}
Let $\cS$ be an extendible contact system of
strings, and let $\cL$ be a corresponding arrangement of
pseudo-lines. We can assume without loss of generality that for any
strings $s_1,s_2,s_3\in \cS$, if their supporting pseudo-lines
intersect in a point $p$, then $s_1,s_2,s_3$ also intersect in $p$.
\end{observation}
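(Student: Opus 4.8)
The plan is to eliminate the unwanted incidences by a finite sequence of local perturbations of the pseudo-line arrangement, each of which leaves the contact system $\cS$ (hence $G(\cS)$) completely untouched. First I would discard any pseudo-line of $\cL$ that is not the support of a string of $\cS$, so that $\cL$ is finite. Call a point $p$ \emph{bad} if at least three pseudo-lines of $\cL$ pass through $p$ while not all of the strings they support contain $p$. Since $\cL$ is finite, only finitely many points lie on three or more of its pseudo-lines, so there are finitely many bad points; it suffices to show how one bad point can be removed by a modification that produces no new bad point, and then iterate.

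So fix a bad point $p$, let $\ell_1,\dots,\ell_i$ (with $i\ge 3$) be the pseudo-lines of $\cL$ through $p$, let $s_m\subset \ell_m$ be the string supported by $\ell_m$, and relabel so that exactly $s_1,\dots,s_j$ contain $p$ (so $j<i$, since $p$ is bad). I would choose an open disk $D$ centred at $p$ small enough that: (i) the only pseudo-lines of $\cL$ meeting $D$ are $\ell_1,\dots,\ell_i$; (ii) the only strings of $\cS$ meeting $D$ are $s_1,\dots,s_j$ — possible because every other string is a compact arc avoiding $p$, hence at positive distance from $p$; and (iii) $p$ is the only point of $D$ lying on three or more pseudo-lines of $\cL$. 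For $m\le j$ the set $\ell_m\cap D$ is a single arc through $p$ containing $s_m\cap D$, and I would keep all these arcs fixed. For each $m$ with $j<m\le i$, I would replace the arc $\ell_m\cap D$ (which passes through $p$) by a polygonal arc with the same endpoints on $\partial D$ that avoids $p$, choosing these replacement arcs so that inside $D$ the modified curves $\ell_{j+1},\dots,\ell_i$ meet the other pseudo-lines and one another only in transversal double points, none of them at $p$ — concretely, push each $\ell_m$ ($m>j$) slightly off $p$, which resolves the incidence at $p$ into double crossings clustered near $p$, just as one perturbs $i$ concurrent straight lines into general position while keeping $\ell_1,\dots,\ell_j$ concurrent. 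This is the step to carry out with care: one must check that the outcome is still an arrangement of pseudo-lines (any two of the modified curves still cross at most once, since nothing changed outside $D$ and inside $D$ each pair meets at most once by construction) and that $\cS$ is genuinely unchanged (the strings $s_m$ with $m>j$ lie outside $D$, so they are still contained in their modified supports, while the arcs $\ell_m\cap D$ with $m\le j$, hence the strings $s_1,\dots,s_j$ and all contact points of $\cS$, are untouched).

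After this modification the only pseudo-lines of $\cL$ through $p$ are $\ell_1,\dots,\ell_j$ (none if $j=0$), and when there are at least three of them, each of $s_1,\dots,s_j$ contains $p$ by construction; so $p$ is no longer bad. No new bad point is created, since outside $D$ nothing changed and inside $D$ the only point on three or more pseudo-lines is $p$ itself (every other crossing in $D$ being a double point). Iterating over the finitely many bad points, with pairwise disjoint disks — which is possible because the bad points are distinct and finitely many — removes all of them and yields an arrangement with the required property. Finally, all the modifications may be taken polygonal, so Observation~\ref{obs:drapl} continues to hold simultaneously.
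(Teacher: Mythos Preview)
Your proof is correct and follows essentially the same approach as the paper: both locally perturb, inside a small disk around an offending point $p$, those pseudo-lines whose supported strings do not contain $p$, leaving $\cS$ untouched. The paper handles one such pseudo-line at a time and uses the concrete device of replacing $\ell_1\cap D$ by one of the two boundary arcs of $\partial D$, whereas you perturb all the offending pseudo-lines simultaneously into generic position and argue more explicitly about termination and the non-creation of new bad points.
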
 

Consider a point $p$ contained in $\ell_1,\ell_2,\ell_3 \in \cL$, and
such that the string $s_1\in \cS$ supported by $\ell_1$ does not contain
$p$. We now show how to modify $\ell_1$ so that it avoids $p$, without
changing the other elements of $\cL$ and the elements of $\cS$. By Observation~\ref{obs:drapl}, we can assume that
a small disk $D$ centered in $p$ only intersects the elements of $\cS
\cup \cL$ containing $p$, and the intersection of each of these
elements with $D$ consists of the union of at most two segments. By the definition of a
pseudo-line arrangement, the pseudo-lines intersecting $p$ pairwise
cross. Note
that $\ell_1$ cuts the boundary of $D$ in two arcs, say $a,a'$. We then
modify $\ell_1$ by replacing $D\cap \ell_1$ by $a$. The set $\cS$ is not
modified, and the pseudo-lines
intersecting $p$ still pairwise cross, so we obtain a new pseudo-line
arrangement. Repeating this operation if necessary, we finally obtain
a pseudo-line arrangement satisfying Observation~\ref{obs:drapl2}.

\medskip

We can now prove the main result of this section.

\begin{theorem}\label{th:seg}
For any $k\ge 3$, any $k$-touching extendible contact system of strings can be properly
colored with $k+5$ colors.
\end{theorem}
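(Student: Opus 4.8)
The plan is to derive Theorem~\ref{th:seg} from the stronger statement that for $k\ge 3$ every $k$-touching extendible contact system $\cS$ of strings has a vertex of degree at most $k+4$ in $G(\cS)$. Since any subset of the strings of $\cS$, together with the corresponding sub-arrangement of pseudo-lines, is again a $k$-touching extendible contact system, this makes $G(\cS)$ $(k+4)$-degenerate, hence $(k+5)$-colorable by the greedy algorithm. So I would suppose for contradiction that there is a counterexample and, exactly as in the proof of Lemma~\ref{lem:contact}, pick one: a $k$-touching extendible contact system $\cS$ of $n$ strings with minimum degree at least $k+5$ in $G(\cS)$, with $n$ minimum and, subject to this, with $m:=|E(G(\cS))|$ maximum. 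The arguments of Lemma~\ref{lem:contact} apply verbatim to show that $G(\cS)$ is connected, that every string carries at least two contact points, and hence that we may assume both ends of each string are contact points. I would then fix an arrangement $\cL$ of pseudo-lines witnessing extendibility and use Observations~\ref{obs:drapl} and~\ref{obs:drapl2} to assume, in addition, that the strings and pseudo-lines are polygonal and that the pseudo-lines of $\cL$ through any point are exactly the supports of the strings of $\cS$ through that point.

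Next I would reproduce the counting scheme of Lemma~\ref{lem:contact}: let $H(\cS)$ be the plane graph on the contact points of $\cS$ with edges joining contact points consecutive along a string; let $p_i$ and $f_i$ be the numbers of peaks and of flat contact points lying on exactly $i$ strings, and set $c=\sum_i(p_i+f_i)$, $F=\sum_i f_i$. Counting ends of strings two ways gives equation~(\ref{eq1}), $2n=\sum_i i\,p_i+\sum_i(i-1)f_i$; since $\cS$ is $1$-intersecting, the number of edges of $G(\cS)$ is given by~(\ref{eq3}), $m=\sum_i\binom i2(p_i+f_i)$; and directly $|E(H(\cS))|=n+F$.

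The heart of the proof, and the step I expect to be the main obstacle, is an improved bound on $|E(H(\cS))|$ replacing inequality~(\ref{eq2}), obtained from $\cL$. For each end $x$ of a string $s$ with support $\ell\in\cL$, consider the ray of $\ell$ starting at $x$ and leaving in the direction opposite to $s$; since no string of $\cS$ other than $s$ lies on $\ell$, this ray is disjoint from $\cS$ near $x$, and I call it an \emph{empty ray}. There are exactly $2n$ empty rays, two per string. The plan is to augment $H(\cS)$ by adding one vertex $v_\infty$ in the unbounded face and then, routing new edges along empty rays — following an empty ray through a bounded face until it first meets a string, then along that string to a contact point, and joining an empty ray that escapes to infinity to $v_\infty$ — to add at least $n$ new edges while keeping the graph planar and simple. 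The augmented graph then has $c+1$ vertices and at least $(n+F)+n$ edges, so planarity gives
\[
2n+F\le 3(c+1)-6=3c-3 .
\]
The hard part is exactly to show that $n$ edges can genuinely be added: several empty rays may leave the same contact point into the same face, an empty ray may cross strings transversally before escaping to infinity, and one must avoid creating parallel edges (which would break the $3|V|-6$ bound). Observations~\ref{obs:drapl} and~\ref{obs:drapl2} (providing a clean local picture of the arrangement at each contact point) and the pairing of empty rays (two per string, so that one usable empty ray can be selected for each string) are what I would use to route the new edges pairwise disjointly.

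Finally I would feed this into the linear-programming argument of Lemma~\ref{lem:contact}. Using $c=\sum_i(p_i+f_i)$ and~(\ref{eq1}), the inequality $2n+F\le 3c-3$ rewrites as $3\sum_i p_i+2\sum_i f_i\ge 2n+3$. Maximizing $m=\sum_i\binom i2(p_i+f_i)$ over all nonnegative $p_i,f_i$ satisfying this inequality together with~(\ref{eq1}) (with $n$ a constant), the same kind of manipulations as in Claims~\ref{claim8}--\ref{claim7} — shifting weight onto the extreme indices $i\in\{2,k\}$, which can only increase $m$ since $\binom i2$ is convex in $i$ — show that an optimal solution puts all its weight on $p_2,f_2,p_k,f_k$; a short case check among these four variables then shows the maximum is attained with weight only on $f_2$ and $f_k$, and a direct computation gives $m<\tfrac{k+3}{2}\,n$. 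But minimum degree at least $k+5$ forces $2m\ge (k+5)n$, a contradiction. Hence $G(\cS)$ always has a vertex of degree at most $k+4$, which proves the theorem; in particular, since any contact system of segments is extendible, every $k$-touching contact system of segments is $(k+5)$-colorable.
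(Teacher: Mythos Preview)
Your overall plan mirrors the paper's proof of Lemma~\ref{lem:seg}: reduce to a minimal/maximal counterexample, set up the plane graph $H(\cS)$, derive a linear inequality from planarity of an augmented $H(\cS)$, and run a linear program against (\ref{eq1}) and (\ref{eq3}). The divergence is in how you exploit extendibility, and this is where the argument breaks.

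The paper does \emph{not} augment $H(\cS)$ with ray-edges. Instead it uses extendibility together with the maximality of $m$: if a peak $x$ lies in a bounded face, prolong one of the strings through $x$ along its supporting pseudo-line until it first hits another string; by Observation~\ref{obs:drapl2} the new contact is a fresh $2$-touching point, so the system stays $k$-touching and extendible but $G(\cS)$ gains an edge, contradicting maximality. Hence all peaks lie on the outerface, and together with the $f_2$-slack from Lemma~\ref{lem:contact} one obtains inequality~(\ref{eq5}), namely $\sum_i (i-4)p_i+\sum_i(i-5)f_i\le -6-2f_2$. The LP then gives $m<\tfrac{k+5}{2}\,n$, i.e.\ exactly $(k+4)$-degeneracy.

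Your proposed inequality $2n+F\le 3c-3$ is strictly stronger, and too strong to be true. Combined with (\ref{eq1}) and (\ref{eq3}), your own (correctly executed) LP analysis yields $m<\tfrac{k+3}{2}\,n$, which would make every such graph $(k+2)$-degenerate. But the paper exhibits, for $k=6$, a $6$-touching contact system of segments (with every endpoint a contact point) in which every segment touches at least $10$ others, so $m\ge 5n>\tfrac{9}{2}n$. Hence $2n+F\le 3c-3$ fails for that system, and the claim that one can always add $n$ new simple non-crossing edges to $H(\cS)\cup\{v_\infty\}$ along empty rays is false in general. The concrete obstruction is visible already at a one-sided flat $k$-contact point: all $k-1$ empty rays there leave into the \emph{same} face $F(x)$ of $H(\cS)$, and when many such points border one face your routing rule (follow the ray to the first string, then slide to a contact point) is forced to produce parallel edges. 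The idea you are missing is precisely the paper's: use the maximality of $m$ to \emph{eliminate} interior peaks via prolongation, rather than trying to draw extra edges out of them.
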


As before, the result is a consequence of a bound on the degeneracy of
the corresponding intersection graphs:

\begin{lemma}\label{lem:seg}
For any $k\ge 3$, if $\cS$ is a $k$-touching extendible contact system
of strings, then $G(\cS)$ contains a vertex of degree at most $k+4$.
\end{lemma}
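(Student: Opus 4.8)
The plan is to follow the strategy of Lemma~\ref{lem:contact}, replacing the crude planarity bound on the auxiliary plane graph by a stronger one that exploits the pseudo-line supports. First I would take a counterexample $\cS$ (a $k$-touching extendible contact system of $n$ strings whose intersection graph has minimum degree at least $k+5$, so that $G(\cS)$ has $m\ge n(k+5)/2$ edges), minimizing $n$ and then maximizing $m$, together with an arrangement $\cL$ of pseudo-lines witnessing extendibility. Using Observations~\ref{obs:drapl} and~\ref{obs:drapl2} I may assume the strings and pseudo-lines are polygonal and that every concurrence of three supports is an honest contact point. The usual reductions then apply: $G(\cS)$ is connected, every string has at least two contact points (otherwise its vertex has degree at most $k-1\le k+4$), and one may assume both ends of every string are contact points. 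Let $H(\cS)$ be the plane graph on the contact points, let $p_i$ (resp. $f_i$) be the number of peaks (resp. flat contact points) of multiplicity $i$, and $c=\sum_i(p_i+f_i)$. Counting ends of strings gives $2n=\sum_i ip_i+\sum_i(i-1)f_i$, and since the system is $1$-intersecting (two supports cross at most once) one has $m=\sum_i\binom i2(p_i+f_i)$; these are exactly the analogues of equations (\ref{eq1}) and (\ref{eq3}) in the previous proof.

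The crux is an inequality strengthening (\ref{eq2}), and this is where extendibility is used. At a contact point $p$ of multiplicity $i$, the $i$ supporting pseudo-lines all pass through $p$ and continue past it, so each string ending at $p$ leaves, along its support on the far side of $p$, a ``free branch'' covered by no string; and since two supports cross at most once, such a free branch only ever meets strings transversally. I would combine these free branches with the fact that, by $1$-intersectability, a face of $H(\cS)$ carrying many reflex corners of ``through'' strings must have many vertices, to show that one can insert into $H(\cS)$ considerably more edges than the $f_2$ of Lemma~\ref{lem:contact} while preserving planarity. Concretely I expect to obtain an inequality of the shape $\sum_i(i-6+2x_i)p_i+\sum_i(i-5+2y_i)f_i\le -12$, where $x_i$ and $y_i$ are nonnegative, increase with $i$, and grow linearly at points of large multiplicity — it is precisely a linear-in-$i$ gain that pushes the multiplicative constant from $\tfrac43$ down to $1$. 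I expect this step to be the main obstacle: one must carry out a local case analysis of the cyclic pattern of string-branches and free-branches around peaks and flats, decide how many new edges each contact point contributes, and — the delicate point — check via a global charging argument that all the new edges can be realised simultaneously inside the faces of $H(\cS)$ without overloading any face (the one-sided subcase should be easier and yield the better additive constant, exactly as in Theorem~\ref{th:contactos}).

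Granting such an inequality, the remainder is bookkeeping in the spirit of Claims~\ref{claim8}, \ref{claim9} and~\ref{claim7}: small perturbations of the vector $(p_i,f_i)$ that preserve (\ref{eq1}) and the new inequality while increasing $m$ show that an extremal solution of the associated linear program is supported on the variables $p_2,f_3,p_k,f_k$ (and even fewer in the one-sided case). A short computation of $m=\binom22 p_2+\binom32 f_3+\binom k2(p_k+f_k)$ along the constraints then yields $m<n(\tfrac k2+\tfrac52)$, contradicting $m\ge n(k+5)/2$ and proving that $G(\cS)$ has a vertex of degree at most $k+4$. Finally, for the few small values $k=3,4$ I would note that (\ref{eq1}) together with $1$-intersectability already gives $m<n(\tfrac k2+\tfrac52)$ directly, so the strengthened inequality is really only needed for $k\ge 5$.
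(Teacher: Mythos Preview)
Your overall strategy (minimal counterexample, LP on the $p_i,f_i$) is the same as the paper's, but you have missed the one clean idea that makes the segment case work, and your replacement for it is both speculative and aimed in the wrong direction.

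The paper does \emph{not} strengthen inequality~(\ref{eq2}) by inserting extra planar edges via ``free branches'' and a charging argument. Instead it uses extendibility together with the maximality of $m$ to show that \emph{every peak lies on the outerface of $H(\cS)$}. The argument is one line: if a peak $x$ were interior, pick a string $s$ ending at $x$ and prolong $s$ along its supporting pseudo-line until it first hits another string $s'$; since two supports meet at most once (and by Observation~\ref{obs:drapl2} the new contact is only $2$-touching), this produces a strictly larger $k$-touching extendible system on the same $n$ strings, contradicting the maximality of $m$. With all $p=\sum_i p_i$ peaks on the outerface, one can add $f_2+p-3$ edges to $H(\cS)$ while keeping it planar, and the replacement for (\ref{eq2}) becomes
\[
\sum_{i=2}^{k}(i-4)p_i+\sum_{i=2}^{k}(i-5)f_i\le -6-2f_2.
\]
So the gain over (\ref{eq2}) is a \emph{constant} $+2$ on each $p_i$ coefficient, not the linear-in-$i$ gains $x_i,y_i$ you were hoping for. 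The effect of this constant gain is not to push the LP optimum to $\{p_2,f_3,p_k,f_k\}$ as you predicted, but to make it optimal to eliminate \emph{all} $p_i$ (trade $p_i$ for $\tfrac{i}{i-1}f_i$: equation~(\ref{eq1}) is preserved, the left side of the new inequality drops, and $m$ increases). With only $f_3$ and $f_k$ left, the computation $m=3f_3+\binom{k}{2}f_k<\tfrac{n}{2}(k+5)$ is immediate.

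Your proposed route---a local case analysis around contact points plus a global charging to realise many new planar edges simultaneously---might in principle yield \emph{some} improved inequality, but you have not carried it out, you expect the wrong shape for the gain, and you end at the wrong extremal support. The actual obstacle you flagged simply does not arise: the prolongation-plus-maximality trick sidesteps it entirely.
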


\begin{proof}
The proof is similar to that of Theorem~\ref{th:contact}. We
consider a counterexample $\cS$ consisting
of $n$ $k$-touching extendible strings. Since $G(\cS)$ has minimum
degree at least $k+5$, $G(\cS)$ has $m
\ge \tfrac12 (k+5)\,n$ edges. Again, we take
$n$ minimal, and with respect to this, $m$ maximal. As before, we
can assume that $G(\cS)$ is connected and that the two ends of
each string are contact points.

\smallskip

We again consider the plane graph $H({\cS})$ whose vertices are the contact
points of $\cS$, whose edges link two contact points if and only if they are
consecutive on a string of $\cS$, and whose faces are the connected
regions of $\bbR^2\setminus \cS$. Let $p_i$ and $f_i$ be the number of contact points of exactly $i$
strings of $\cS$ that are respectively peaks and flat. Let
$p=\sum_{i=2}^{k}p_i$ and $c=\sum_{i=2}^{k}(p_i+f_i)$.

Recall that any face $f$ of $H(\cS)$ contains at least
$|F^{-1}(f)|+3$ vertices (where $F^{-1}(f)$, defined in the proof of
Lemma~\ref{lem:contact}, is the number of flat contact points
``incident'' to $f$), thus at least $|F^{-1}(f)|$ edges can be
added to $H(\cS)$ (inside $f$) with the property that $H(\cS)$ remains
planar. We now show that, moreover, the vertices
corresponding to the peaks of $\cS$ all lie on the outerface of
$H({\cS})$. This directly implies that we can add $f_2+p-3$ edges to
$H(\cS)$, while keeping $H(\cS)$ planar (as a consequence, $H({\cS})$ has at most
$3c-6-(f_2+p-3)=3c-3-f_2-p$ edges).

Indeed, if some peak $x$ of $\cS$ is not incident to the outerface, we
choose a string $s$ containing $x$ and prolong $s$ after $x$
(following the pseudo-line supporting $s$) until
it hits some other segment $s'$ (note that since $\cS$ is extendible,
the pseudo-lines supporting $s$ and $s'$ intersect at most once, and
thus $s$ and $s'$ did not intersect previously). Let $\cS'$ be the new
contact system of strings. It is still extendible, and by
Observation~\ref{obs:drapl2} we can assume that $s\cap s'$ is only
contained in $s$ and
$s'$, so the new contact point is 2-touching. Consequently, this new contact
system is extendible and $k$-touching, which contradicts the
maximality of $m$.

It follows that inequality (\ref{eq2}) in the proof of
Theorem~\ref{th:contact} can be replaced by:
\begin{equation}
\label{eq5}
\sum_{i=2}^{k} (i-4) p_i + \sum_{i=2}^{k} (i-5)f_i \leq - 6-2 f_2
\end{equation}

We consider the linear program (LP3) defined on variables $p_i$
and $f_i$ with values in $\mathbb{R}^+$ such that the
equation~(\ref{eq1}) and the inequality (\ref{eq5}) are satisfied, and
such that the value $m$ defined by (\ref{eq3}) is maximized.

The coefficients of the variables $f_i$ being the same in (\ref{eq2})
and (\ref{eq5}), Claim~\ref{claim8} (the optimal solutions of (LP3) are such that $f_2
= 0$) and Claim~\ref{claim9} (the optimal solutions of (LP3) are such that $f_i = 0$,
for every $4\leq i\leq k-1$) are still satisfied. Claim~\ref{claim7} is
now replaced by the following stronger claim:

\begin{claim}\label{claim11}
The optimal solutions of (LP3) are such that $p_i = 0$, for every
$2\leq i \leq k$.
\end{claim}
If for some $2\leq i\leq k$, $p_i\ne 0$, choose a small $\epsilon>0$
and replace $p_{i}$ by $p_i-\epsilon$, and $f_{i}$ by
$f_{i}+\epsilon\,\frac{i}{i-1}$. Then (\ref{eq1}) remains valid,
inequality (\ref{eq5}) still holds (the left-hand side is decreased by
$\frac{4\epsilon}{i-1}$ and the right-hand side remains unchanged), while the value of $m$ in (\ref{eq3})
is increased by $\frac{\epsilon i}{2}$. This concludes the proof of the claim.

\bigskip

It follows from Claims~\ref{claim8}, \ref{claim9}, and~\ref{claim11} that $c=f_3+f_k$, so
(\ref{eq5}) gives $(k-5)f_k<2f_3$. By equation (\ref{eq1}), $2f_3=2n-(k-1)f_k$, which implies $(k-5)f_k<2n-(k-1)f_k$. This
can be rewritten as $$(k-3)f_k<n.$$ Now, by equation
(\ref{eq3}),
$$
\begin{array}{rcl}
m=3f_3+{k \choose 2}f_k & \le & \frac32(2n-(k-1)f_k)+{k
    \choose 2}f_k \\
  & \le & 3n+f_k\tfrac{(k-1)(k-3)}{2}\\
  & < & n(\tfrac{k-1}2 +3).
\end{array}
$$
We obtain that the graph $G(\cS)$ has less than $\tfrac{n}2(k+5)$
edges, which is a contradiction.
\end{proof}

It follows that intersection graphs of $k$-touching segments are
$(k+4)$-degenerate and then $(k+5)$-colorable. Note that the
$(k+4)$-degeneracy may not be tight for every $k$. Indeed, we only
know graphs that are not $(k+3)$-degenerate for $k\le 6$. Those graphs
are obtained in the following way for $k=6$: First consider the
segments in a straight-line drawing of a planar triangulation with
minimum degree 5 and maximum degree 6, such that any degree five
vertex is at distance at least two from the outerface, and such that
any two vertices of degree five are at distance at least three
apart. Such a graph can be obtained from the icosahedron by
applying several times the following operation: subdivide every edge once, and inside
each face, add 3 edges connecting the 3 (newly created) vertices of
degree 2. It follows from Euler's formula that there are precisely 12
vertices of degree 5 in the triangulation, and therefore precisely 60 segments whose ends
respectively touch a 5- and a 6-contact points. Let
$s_1,\ldots,s_{60}$ be these segments. Those are the only
segments that touch less than 10 other segments (they only touch 9 of
them). To make each of them touch one more segment, prolong successively the
segments $s_1,s_2,\ldots,s_{60}$ by their end that is at the
6-contact point, until reaching another segment (we can assume that
the drawing of the triangulation is such that no line contains two
edges incident to the same vertex, therefore each of the segments
$s_1,\ldots,s_{60}$ hits an interior point of another segment).

\begin{figure}[htbp]
\begin{center}
\subfigure[\label{fig:segd2}]{\includegraphics[scale=0.4]{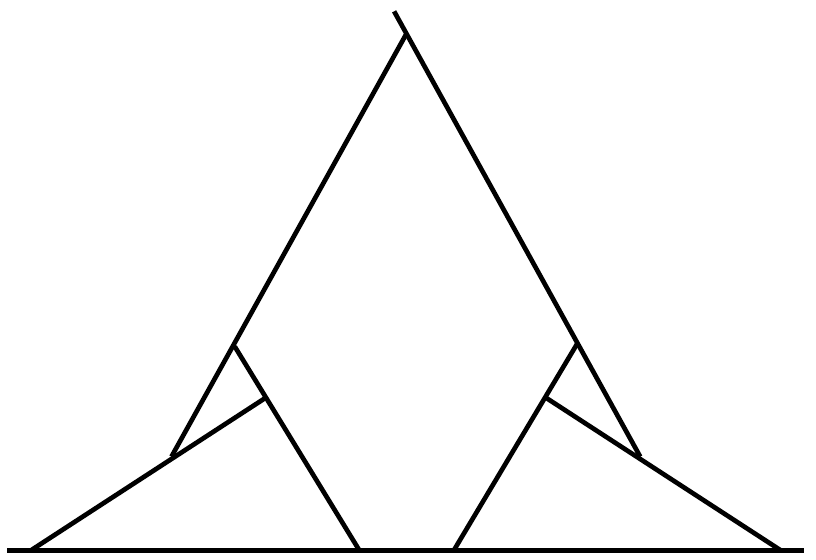}}
\hspace{1cm}
\subfigure[\label{fig:segd3}]{\includegraphics[scale=0.4]{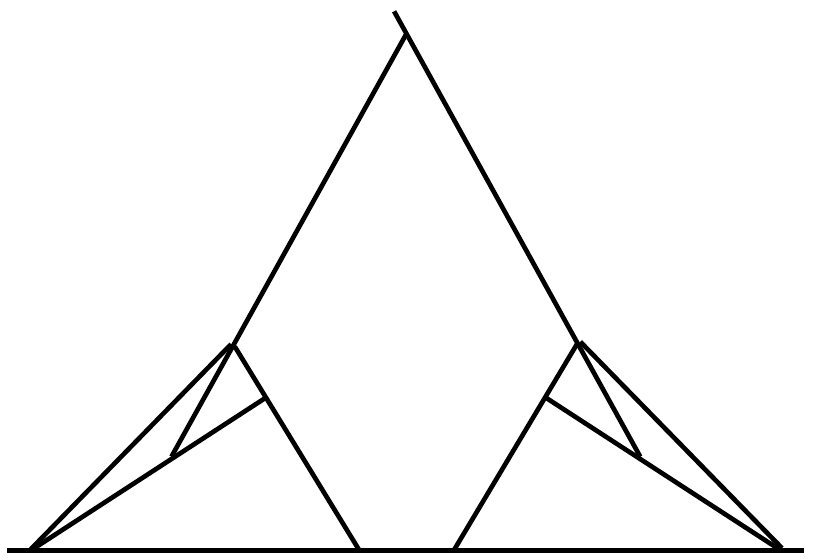}}
\hspace{1cm}
\subfigure[\label{fig:segdk}]{\includegraphics[scale=0.4]{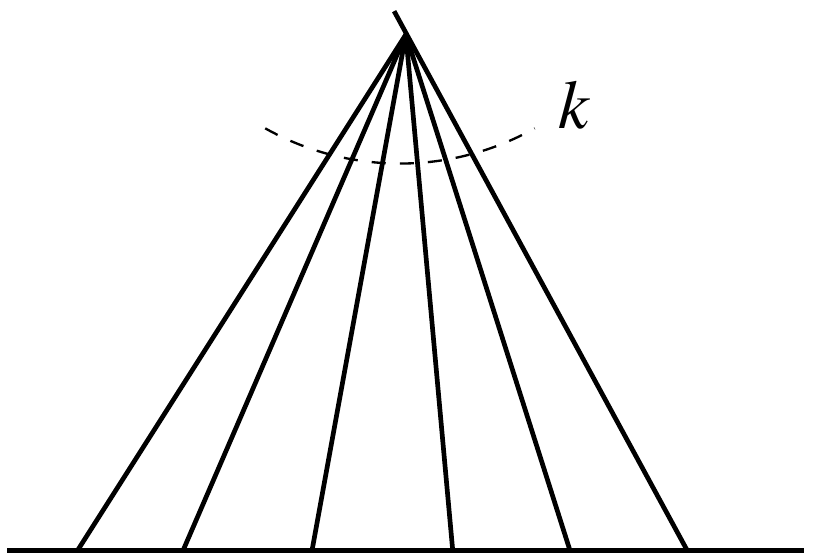}}
\caption{(a) A 2-touching set of segments requiring 4 colors (b) A
  3-touching set of segments requiring 5 colors (c) A $k$-touching set
  of segments requiring $k+1$ colors. \label{fig:segd}}
\end{center}
\end{figure}

Figure~\ref{fig:segd} depicts $k$-touching sets of segments requiring
$k+2$ colors, for $k=2,3$. However it does not appear to be trivial to
extend this construction for any $k \ge 4$. Note that
Figure~\ref{fig:segd3} also shows that there are intersection graphs
of 3-contact representations of segments (with two-sided contact
points) that are not planar (this remark also appears in~\cite{Hli95}).

\section*{Acknowledgments}

The authors would like to thank the reviewers for their suggestions
and remarks (in particular for finding an error in a previous version
of our manuscript).

\end{document}